\documentclass[a4paper,11pt]{amsart}
\usepackage[margin=3cm]{geometry}
\usepackage{amsfonts,amsmath,amssymb,amscd,graphicx}
\usepackage[colorlinks]{hyperref}
\usepackage{mathrsfs}

\usepackage{tensor}
\usepackage{breqn}

\usepackage{graphicx,xcolor}

\usepackage{caption, subcaption}

\usepackage[english]{babel}
\usepackage[utf8]{inputenc}
\usepackage{ucs} 
\usepackage{float}%
\usepackage{enumitem}
\usepackage{graphicx}
\usepackage{tikz}
\usepackage{indentfirst}
\usepackage{fancyhdr}
\usepackage[all]{xy}
\usepackage{physics}
\usepackage{tikz-cd}
\usepackage{comment}

\theoremstyle{theorem}
\newtheorem{theorem}{Theorem}[section]
\newtheorem{corollary}[theorem]{Corollary}
\newtheorem{lemma}[theorem]{Lemma}

\newtheorem{proposition}[theorem]{Proposition}

\theoremstyle{definition}
\newtheorem{definition}[theorem]{Definition}
\newtheorem{remark}[theorem]{Remark}

\numberwithin{equation}{section}

\newenvironment{preuve}[1][]
{\vskip 2mm  \noindent\emph{\bf Proof#1. }}{$\Box$ \vskip 2mm}

\newcommand{\beq}{\begin{eqnarray}}
\newcommand{\eeq}{\end{eqnarray}}
\newcommand{\beqe}{\begin{eqnarray*}}
	\newcommand{\eeqe}{\end{eqnarray*}}

\DeclareMathOperator{\vol}{vol}

\newcommand{\bm}{\mathcal{M}}

\newcommand{\R}{\mathbb{R}}

\newcommand{\Q}{\mathbb{Q}}

\let\epsilon=\varepsilon

\begin{document}

\title[A recursion for the volume of moduli spaces]{A recursion for the volume of the moduli space of hyperbolic spheres}
\author{\sc Michele Ancona and Damien Gayet}
\maketitle
\begin{abstract}
We prove the existence of a non-linear recursive relation for the volume of the moduli space of hyperbolic spheres with conical points or geodesic boundaries. This relation generalizes a result by Zograf~\cite{zograf1993weil}, where the same was derived for cusps. 
\end{abstract}
\tableofcontents
\section{Introduction}
The goal of this paper is to provide a recursion formula à la Zograf for the Weil--Petersson volume of the moduli space of genus zero hyperbolic surfaces with conical singularities (Theorem \ref{recutheta}). The main application is a new computation of the Weil--Petersson volume of the hyperelliptic locus $\mathcal{H}_g$ in $\mathcal{M}_g$. We also provide an analogous formula for the Weil--Petersson volume of the moduli space of genus zero hyperbolic surfaces with geodesic boundaries (Theorem \ref{recu}). In this case, we derive a differential equation satisfied by the generating series for the volumes of the moduli spaces of genus zero hyperbolic surfaces with one geodesic boundary and $n-1$ cusps (Theorem \ref{h1}). When the length of the geodesic boundary tends to zero, this differential equation specializes to the one found by Zograf and  Kaufmann--Manin--Zagier \cite{kaufmann1996higher}.%
\subsection{Spheres with conical singularities}

Given a Riemann surface $X$ and a point $x\in X$, we say that a metric on $X$ has a conical singularity of angle $\theta\in \left]0,2\pi \right]$ at $x$ if there exist holomorphic coordinates around $x$ in which the metric is given by  $$\frac{\rho(z)|\mathrm{d}z|^2}{|z|^{2a}},$$ where $\theta=2\pi(1-a)$ and $\rho$ is a positive function. The case $\theta=0$ corresponds to a cusp; in that case, one can find holomorphic coordinates around $x$ in which the metric is given by $$\frac{\rho(z)|\mathrm{d}z|^2}{|z|^{2}\log^2(1/|z|^2)}.$$ 

Given a closed orientable surface $X$ with $n$ marked points $x_1,\dots,x_n$, and given $n$ angles $\theta=(\theta_1, \cdots, \theta_n)\in [0,2\pi]^n$, the necessary and sufficient condition for $X$ to admit a hyperbolic metric with conical singularities of angles $\theta_1,\dots,\theta_n$ at the points $x_1,\dots,x_n$ is given by the Gauss--Bonnet formula; see~\cite{mcowen1988point, troyanov1991prescribing}. In the genus $0$ case, the condition becomes:
\begin{equation}\label{cond}
4\pi-\sum_{i=1}^n(2\pi-\theta_i)<0.
\end{equation}
For any $\theta=(\theta_1,\dots,\theta_n)\in [0,2\pi]^n$ satisfying condition~\eqref{cond}, we denote by $\mathcal M_{0,n}(i\theta)$ the moduli space of genus $0$ hyperbolic surfaces with $n$ conical points of angles $\theta$. Note that $\mathcal M_{0,n}(i0)$ identifies with the standard moduli space $\mathcal M_{0,n}$ of marked genus $0$ Riemann surfaces.

The moduli space $\mathcal M_{0,n}(i\theta)$ is endowed with a Weil--Petersson-type symplectic form $\omega_{\mathrm{WP}}(i\theta)$; see~\cite{schumacher2011weil}. When $\theta=(0,\dots,0)$, the form $\omega_{\mathrm{WP}}(i0)$ coincides with the standard Weil--Petersson symplectic form $\omega_{\mathrm{WP}}$ on $\mathcal M_{0,n}$.
Let 
\begin{equation}\label{def volume}
\vol_{\omega_{\mathrm{WP}}(i\theta)}(\mathcal M_{0,n}(i\theta))= \frac{1}{(n-3)!}\int_{\mathcal M_{0,n}(i\theta)}\omega_{\mathrm{WP}}(i\theta)^{n-3}
\end{equation}
be the volume of the moduli space of spheres with $n$ conical points of angle $\theta$.
Such volumes are finite have been studied in several occasions (see for example~\cite{wolpert1983homology,do2009weil,schumacher2011weil,anagnostou2022volumes,turiaci2023}). In \cite{anagnostou2023weil}, the dependence of such volumes on $\theta \in [0,2\pi]^n$ is studied; in particular, it is shown that the volume is a piecewise polynomial function of $\theta$.

Our first main theorem, proved using the techniques of \cite{zograf1993weil}, is a recursion formula for computing the volumes \eqref{def volume}.
Before stating our result, it is convenient to define the volume also in the case when Condition \eqref{cond} is not satisfied. For this, remark that $\mathcal{M}_{0,3}(i\theta)$ is either a point (when Condition \eqref{cond} is satisfied), either empty. We nevertheless define  $\vol_{\omega_{\mathrm{WP}}(i\theta)}(\mathcal M_{0,3}(i\theta))=1$ for any $\theta=(\theta_1,\theta_2,\theta_3)\in [0,2\pi]^3$, so that the volume function is continuous on $[0,2\pi]^3$.
Similarly, one can show that, if $n\geq 4$, then $\vol_{\omega_{\mathrm{WP}}(i\theta)}(\mathcal M_{0,n}(i\theta))\to 0$ when $4\pi-\sum_{i=1}^n(2\pi-\theta_i)\to 0$. In this case we define $\vol_{\omega_{\mathrm{WP}}(i\theta)}(\mathcal M_{0,n}(i\theta))=0$ for any $\theta$ not satisfying Condition \eqref{cond}, when $n\geq 4$, so that the volume function is also continuous on $[0,2\pi]^n$.

Define $V_n(i\theta)$ as
$$ \vol_{\omega_{\mathrm{WP}}(i\theta)}(\mathcal{M}_{0,n}(i\theta)) = 
\frac{(2\pi^2)^{n-3}}{(n-3)!}V_n(i\theta).$$

\begin{theorem}\label{recutheta}
	For $n\geq 4$ and $\theta\in [0,2\pi]^n$ satisfying condition~\eqref{cond},
	\begin{align*} V_n(i\theta) = \frac12\sum_{i=1}^{n-3}
	\frac{1}{n-1}i(n-i-2)
	{n-4 \choose i-1}
	\sum_{\substack{I\subset\{1, \cdots, n\}\\|I|=i+1}}
	V_{i+2}(i\theta_{|I},0)V_{n-i}(i\theta_{|I^c},0)\\
	- \frac{1}{4\pi^2}
	\sum_{i=1}^{n-3}
	{n-4 \choose i-1}{n-1\choose 2}^{-1}
	\sum_{\substack{j,k,\ell\in  \{1, \cdots, n\}\\
			j\neq k, k\neq \ell, j\neq \ell
	}}
	\theta_j^2
	\sum_{\substack{I\subset\{1, \cdots, n\} \\|I|=i+1 \\ j\in I, k\in I^c, \ell\in I^c}}
	V_{i+2}(i\theta_{|I},0)V_{n-i}(i\theta_{|I^c},0)\\
	+\frac{1}{4\pi^2} \sum_{I\in\mathcal S_2(\theta)}(2\pi-\sum_{i\in I}(2\pi-\theta_i))^2V_{n-1}(i\theta_{|I^c},0).
	\end{align*}%
	Here, $S_2(\theta)$ is the set of subsets $I$ of $\{1,\dots,n\}$ verifying
	\begin{itemize}
		\item $|I|=2$;
		\item $2\pi-\sum_{i\in I}(2\pi-\theta_i)\geq 0$.
	\end{itemize} 
\end{theorem}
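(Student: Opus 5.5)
Following Zograf's approach for cusps, we compare two expressions for the volume of the divisor $D_{1n}\subset\overline{\mathcal M}_{0,n}(i\theta)$ that also appears in Zograf's recursion. The input is the cohomological description of the conical Weil--Petersson form, in the spirit of Mirzakhani and of Anagnostou--Mullane--Norbury: on the Deligne--Mumford-type compactification $\overline{\mathcal M}_{0,n}(i\theta)$ we have
\[
[\omega_{\mathrm{WP}}(i\theta)] = 2\pi^2\kappa_1 - \tfrac12\sum_j \theta_j^2\psi_j .
\]
This is the analytic continuation $L_j=i\theta_j$ of $2\pi^2\kappa_1+\tfrac12\sum L_j^2\psi_j$. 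The formula is valid as long as no two cone points can collide, that is, as long as no pair $I$ satisfies $\theta_{j}+\theta_k> 2\pi$. When a pair can merge, the compactification has to be modified: the boundary divisor $D_I$ with $|I|=2$ is replaced by a stratum where the two cone points fuse into a single cone point of angle $\theta_I=2\pi-\sum_{i\in I}(2\pi-\theta_i)$. This stratum is a copy of $\mathcal M_{0,n-1}(i\theta_{|I^c},\theta_I)$, and it contributes the correction term $\frac{1}{4\pi^2}\theta_I^2 V_{n-1}$, which is the last line of the theorem. The set $\mathcal S_2(\theta)$ is exactly the set of pairs for which this happens.

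Concretely, the plan is to write $V_n(i\theta)$ as an intersection number and then use the divisor relation on $\overline{\mathcal M}_{0,n}$: with $n-1$ points chosen, $\kappa_1$ and $\psi_j$ can be written as combinations of boundary divisors $D_I$ using Keel's relations. Zograf's argument expresses $(n-1)\binom{n-1}{2}$ times the volume through $\sum_I |I|(n-|I|)$-weighted products of smaller volumes, obtained by restricting $\omega^{n-4}$ to boundary divisors $D_I\cong \overline{\mathcal M}_{0,|I|+1}\times\overline{\mathcal M}_{0,n-|I|+1}$. The new node carries angle $0$, which explains the arguments $(\theta_{|I},0)$. On each factor the class restricts to the corresponding class with a cusp at the node, so the integral factors into $V_{i+2}V_{n-i}$ with the binomial $\binom{n-4}{i-1}$ counting the distribution of powers.

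The steps are as follows.
\begin{enumerate}
\item Use the $\kappa_1$ part to reproduce Zograf's coefficient $\frac12\frac{i(n-i-2)}{n-1}$.
\item Handle the $\psi_j$ part. Express $\psi_j=\sum_{I\ni j,\ k,\ell\notin I} D_I$ for each choice of $k\ne\ell$ different from $j$, and average over the $\binom{n-1}{2}$ choices. This gives the second line with the factor $\binom{n-1}{2}^{-1}$ and the normalisation $\frac{1}{4\pi^2}=\frac{1}{2}\cdot\frac{1}{2\pi^2}$ coming from the definition of $V_n$.
\item Add the contribution of the collision strata for $I\in\mathcal S_2(\theta)$. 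Here one integrates $\omega^{n-4}$ over a $\psi$-type correction supported on that stratum, with coefficient $\theta_I^2/2$.
\end{enumerate}

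The main obstacle is step 3 together with the justification of the cohomology class formula for $\omega_{\mathrm{WP}}(i\theta)$ when $\theta_j+\theta_k>2\pi$. One has to show that the Weil--Petersson form extends to the correct compactification, and that its class differs from the naive analytic continuation exactly by $\frac12\sum_{I\in\mathcal S_2}\theta_I^2[D_I]$. I would first try to obtain this from the piecewise polynomiality of \cite{anagnostou2023weil}: the jump in the polynomial across each wall $\theta_j+\theta_k=2\pi$ should match this correction. Continuity of the volume, with $V=0$ outside condition \eqref{cond}, would then fix the constants.
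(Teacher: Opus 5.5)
Your steps 1--2 are the paper's argument. You write $\int_{\overline{\mathcal M}_{0,n}}\omega^{n-3}=[\omega_{\mathrm{WP}}(i\theta)]^{n-4}\cdot\mathrm{PD}[\omega_{\mathrm{WP}}(i\theta)]$, expand $\mathrm{PD}$ with Zograf's formula and the averaged boundary expression for $\psi_j$, and evaluate $\omega^{n-4}\cdot\Delta_I$ by restriction to $\Delta_I\cong\overline{\mathcal M}_{0,I\cup\{n+1\}}\times\overline{\mathcal M}_{0,I^c\cup\{n+2\}}$. The real input, however, is the class of $\omega_{\mathrm{WP}}(i\theta)$ on $\overline{\mathcal M}_{0,n}$, and your plan neither proves it nor states it correctly.

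\emph{The class.} The paper imports it from Anagnostou--Mullane--Norbury (Lemma~\ref{cohomology}). There the correction is $\tfrac12\sum_{I\in\mathcal S(\theta)}\theta_I^2\Delta_I$, with $\theta_I:=2\pi-\sum_{i\in I}(2\pi-\theta_i)$, summed over \emph{all} $I$ with $|I|\ge 2$, not only over pairs. Dropping the $|I|\ge 3$ terms changes the volume. For $n=5$ and $\theta=(\alpha,\alpha,\alpha,0,0)$ with $4\pi/3<\alpha<2\pi$, omitting $\tfrac12(3\alpha-4\pi)^2\Delta_{\{1,2,3\}}$ changes $\int\omega^2$ by $\tfrac14(3\alpha-4\pi)^4$. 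These terms are absent from the final recursion only because, for $|I|\ge 3$, $\omega^{n-4}\cdot\Delta_I$ carries the factor $V_{i+2}(i\theta_{|I},0)=0$. That is the paper's argument, and it is missing from yours. Your fallback (matching wall jumps via piecewise polynomiality and continuity) cannot produce the class, for three reasons:
\begin{itemize}
\item continuity only says the jump vanishes on the wall, so it does not determine the jump;
\item there are also walls $\sum_{i\in I}(2\pi-\theta_i)=2\pi$ with $|I|\ge 3$;
\item a volume function does not determine a class in $H^2(\overline{\mathcal M}_{0,n})$, and you need the class to restrict to divisors.
\end{itemize}

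\emph{Step~3.} This is the more serious problem, and here your own geometric picture works against the formula you are aiming for. Take $I\in\mathcal S_2(\theta)$, so $\Delta_I\cong\overline{\mathcal M}_{0,I^c\cup\{n+2\}}$.
\begin{itemize}
\item The $\psi_j$ with $j\in I$ restrict to $0$, since they live on the $\overline{\mathcal M}_{0,3}$ factor.
\item By Lemma~\ref{auto}, $\iota_I^*\Delta_I=-\psi_{n+1}-\psi_{n+2}=-\psi_{n+2}$.
\item $[\omega_{\mathrm{WP}}]$, the $\psi_i$ with $i\notin I$, and the other correction divisors restrict to the corresponding terms for the angle vector $(\theta_{|I^c},\theta_I)$.
\end{itemize}
Hence $\iota_I^*[\omega_{\mathrm{WP}}(i\theta)]=[\omega_{\mathrm{WP}}(i\theta_{|I^c},i\theta_I)]$. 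This is your merged cone point, not a cusp, so $\omega^{n-4}\cdot\Delta_I=(2\pi^2)^{n-4}V_{n-1}(i\theta_{|I^c},i\theta_I)$. This value must be used in the last line, \emph{and} for the same divisors in the first two lines, where your cusp-at-every-node rule puts $V_3\,V_{n-1}(i\theta_{|I^c},0)$.

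So your claim that the collision stratum gives ``the last line of the theorem'' does not follow. Carried out consistently, your method yields the recursion with $V_{n-1}(i\theta_{|I^c},i\theta_I)$ replacing every $I\in\mathcal S_2(\theta)$ term. The paper reaches the displayed form only by asserting $[\omega_{\mathrm{WP}}(i\theta)]\cap\Delta_I=[\omega_{\mathrm{WP}}(i\theta_{|I^c},0)]$ for $|I|=2$. That amounts to applying Lemma~\ref{boundary} to a divisor in $\mathcal S(\theta)$, and the computation above contradicts it.

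A concrete check: for $n=5$ and $\theta=(2\pi,2\pi,0,0,0)$, Lemma~\ref{cohomology} gives $[\omega]=2\pi^2(\kappa_1-\psi_1-\psi_2+\Delta_{\{1,2\}})$ and $\int_{\overline{\mathcal M}_{0,5}}\omega^2=0$. The displayed right-hand side is nonzero. Using $V_4(0,0,0,2\pi i)=0$ instead of $V_4(0,0,0,0)=1$ for $I=\{1,2\}$ makes it vanish. So step~3 cannot be closed toward the statement as written.
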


The main motivation behind Theorem \ref{recutheta} is the computation of the volume of the moduli space of genus $g$ hyperelliptic surfaces $\mathcal H_g\subset \bm_{g}$.
Indeed, by~\cite[Section 4, p.~234]{faber2000logarithmic},
\begin{equation}
\vol_{\omega_{\mathrm{WP}}} \mathcal H_g =
\frac{1}{(2g+2)!}
\vol \bm_{0,2g+2}(i\pi, \cdots, i\pi).
\end{equation}
This raises a natural question: is it possible to find a simple asymptotic formula for $\vol(\mathcal{H}_g)$ as $g\to+\infty$?

\subsection{Spheres with boundaries}
Let now $L=(L_1,\cdots, L_n)\in \R_+^n$ be an $n$-tuple of non-negative real numbers. We denote by
$\bm_{g,n}(L)$ the moduli space of genus $g$ hyperbolic surfaces with $n$ geodesic boundaries of lengths $L_1,\cdots, L_n$. When $L=(0,\dots, 0)$, the moduli space $\bm_{g,n}(0)$ can be identified with the moduli space $ \bm_{g,n}$ of hyperbolic surfaces with $n$ marked points (geometrically, the boundaries degenerate into cusps).

By work of Goldman \cite{goldman1984symplectic}, the moduli space $\bm_{g,n}(L)$ carries a natural Weil--Petersson symplectic form $\omega_{\mathrm{WP}}(L)$. The associated volume, called the Weil--Petersson volume, is finite. When $L=(0,\dots,0)$, the form $\omega_{\mathrm{WP}}(0)$ coincides with the standard Weil--Petersson symplectic form $\omega_{\mathrm{WP}}$ on $\mathcal M_{0,n}$.
Weil--Petersson volumes have been estimated and computed using a wide range of methods, from algebraic to hyperbolic geometry, and many recursive formulas have been found (see for instance~\cite{mirzakhani2007simple,liu2009recursion,mirzakhani13,zograf1993weil,kaufmann1996higher, manin2000invertible,do2009weil, mirzakhani2015towards,mirzakhani2019lengths,hide2025large}). 

The next theorem we present is a recursion formula for the Weil--Petersson volume of the moduli space of genus $0$ curves with boundary components.
To fix notation, let
$$ \vol_{\omega_{\mathrm{WP}}(L)}(\mathcal M_{0,n}(L))= \frac{1}{(n-3)!}\int_{\mathcal M_{0,n}(L)}\omega_{\mathrm{WP}}(L)^{n-3}$$
be the volume of the moduli space of spheres with $n$ geodesic boundaries of lengths $L$, and define $V_n(L)$ as
$$ \vol_{\omega_{\mathrm{WP}}(L)}(\mathcal M_{0,n}(L)) = 
\frac{(2\pi^2)^{n-3}}{(n-3)!}V_n(L).$$
Note that $\displaystyle V_n:=V_n(0) = \frac{(n-3)!}{(2\pi^2)^{n-3}}\vol_{\omega_{\mathrm{WP}}} \bm_{0,n}.$ 
\begin{theorem}\label{recu}For any $L\in \R_+^3$, set $V_3(L) = 1$. 
	For $n\geq 3$ and $L\in \R_+^n$, 	we have
	\begin{eqnarray*} V_n(L) &=& \frac12\sum_{i=1}^{n-3}
		\frac{1}{n-1}i(n-i-2)
		{n-4 \choose i-1}
		\sum_{\substack{I\subset \{1, \cdots, n\}\\ | I| = i+1}}
		V_{i+2}(L_{|I},0)V_{n-i}(L_{|I^c},0)\\
		&& 
		+\frac{1}{4\pi^2}
		\sum_{i=1}^{n-3}
		{n-4 \choose i-1}{n-1\choose 2}^{-1}\sum_{\substack{j,k,\ell\in  \{1, \cdots n\}\\
				j\neq k, k\neq \ell, j\neq \ell
		}}
		L_j^2
		\sum_{\substack{I\subset \{1, \cdots, n\}\\|I|=i+1 \\ j\in I,  k\in I^c, \ell\in I^c}}
		V_{i+2}(L_{|I},0)V_{n-i}(L_{|I^c},0).
	\end{eqnarray*}
\end{theorem}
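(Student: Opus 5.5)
We follow Zograf's strategy in \cite{zograf1993weil} and work on the Deligne--Mumford type compactification $\overline{\mathcal M}_{0,n}(L)$. The key input is the cohomological description of the Weil--Petersson class with boundary lengths: by Mirzakhani's symplectic reduction argument (equivalently Goldman's form, pushed to the compactification),
\[
[\omega_{\mathrm{WP}}(L)] = [\omega_{\mathrm{WP}}] + \tfrac12\sum_{j=1}^n L_j^2\,\psi_j = 2\pi^2\kappa_1+\tfrac12\sum_j L_j^2\psi_j ,
\]
so that $V_n(L)$ is a polynomial in the $L_j^2$ computed by intersection numbers $\int_{\overline{\mathcal M}_{0,n}}(2\pi^2\kappa_1+\frac12\sum L_j^2\psi_j)^{n-3}$. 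Rather than expanding fully, we derive the recursion by splitting off one factor of the class.

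The plan is to write $[\omega_{\mathrm{WP}}(L)]$ as a combination of boundary divisors of $\overline{\mathcal M}_{0,n}$. In genus zero, both $\kappa_1$ and $\psi_j$ are explicit combinations of boundary divisors $D_I$ ($I\sqcup I^c=\{1,\dots,n\}$, $|I|,|I^c|\ge 2$). Using $\psi_j=\sum_{I\ni j,\ I\not\ni k,\ell} D_I$ for any fixed $k,\ell\neq j$, and averaging over the choices of the pair $(k,\ell)$ (giving the factor $\binom{n-1}{2}^{-1}$), together with Zograf's formula $2\pi^2\kappa_1=\pi^2\sum_I \frac{(|I|-1)(|I^c|-1)}{n-1}D_I$ (a weighted version of Wolpert's identity), we get
\[
[\omega_{\mathrm{WP}}(L)]=\sum_{I}\Big(\pi^2\frac{(|I|-1)(|I^c|-1)}{n-1}+\tfrac12\binom{n-1}{2}^{-1}\sum_{\substack{j\in I,\,k,\ell\in I^c}} L_j^2\Big) D_I ,
\]
with each unordered partition counted consistently. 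Then $\int\omega(L)^{n-3}=\sum_I c_I\int_{D_I}\omega(L)^{n-4}$, and $D_I\cong\overline{\mathcal M}_{0,|I|+1}\times\overline{\mathcal M}_{0,|I^c|+1}$, with $\omega(L)$ restricting to $\omega(L_{|I},0)\oplus\omega(L_{|I^c},0)$: the new node carries length $0$, i.e. a cusp, which is why the factors are $V(L_{|I},0)$. The binomial expansion of $\omega^{n-4}$ on the product picks only the term of the correct bidegree, producing $\binom{n-4}{i-1}$ with $|I|=i+1$. Converting volumes into the normalized $V$'s via $\frac{(2\pi^2)^{n-3}}{(n-3)!}$ and symmetrizing over ordered versus unordered splits (the $\tfrac12$) gives the two terms of Theorem~\ref{recu}; the normalization $V_3=1$ covers the terms where one side is $\overline{\mathcal M}_{0,3}$.

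The main obstacle is the first step: justifying the class formula for $\omega_{\mathrm{WP}}(L)$ on the compactification (in particular that the form extends as a closed current whose class is as stated and restricts to boundary strata with a cusp at the node), and getting the combinatorial constants in the $\psi_j$ and $\kappa_1$ boundary expressions exactly right, including signs; that the $L^2$ term carries a $+$ sign, unlike the conical case of Theorem~\ref{recutheta} where $L=i\theta$ turns it into $-\theta^2$, is a useful consistency check. I would first verify the constants on $n=4$, where $V_4(L)=1+\frac{1}{2\pi^2}\sum L_j^2$ is known from Mirzakhani, and then run the general bookkeeping. For $n=3$ the sums are empty and the statement reduces to the convention.
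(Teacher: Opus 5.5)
Your proposal is correct and follows the paper's proof essentially step for step. It uses Mirzakhani's class $[\omega_{\mathrm{WP}}(L)]=[\omega_{\mathrm{WP}}]+\frac12\sum_j L_j^2\psi_j$, Zograf's boundary expression for $[\omega_{\mathrm{WP}}]$ and the pair-averaged boundary expression for $\psi_j$, then splits off one factor and restricts to $\Delta_I$ with a cusp at the node, where $\binom{n-4}{i-1}$ comes from the bidegree. One correction to your sanity check: since $\vol(\mathcal M_{0,4}(L))=2\pi^2+\frac12\sum_j L_j^2$, the correct value is $V_4(L)=1+\frac{1}{4\pi^2}\sum_j L_j^2$, not $1+\frac{1}{2\pi^2}\sum_j L_j^2$. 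The formula reproduces this exactly when the $(k,\ell)$-sum runs over unordered pairs, as your $\binom{n-1}{2}^{-1}$-averaging implicitly does, so do not adjust the constants toward $\frac{1}{2\pi^2}$.
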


For any $q\in \{0, \cdots, n\}$ and $L \in \R_+$, let
\begin{equation}\label{vqn}
 V_{n}^q(L)= V_n (\underbrace{L, \cdots, L}_{q\text{ times}}, \underbrace{0, \cdots, 0)}_{n-q\text{ times}}. 
 \end{equation}
Notice that $V^0_n(L) = V_n(0,\cdots, 0) = V_n$. 
	\begin{corollary}\label{coror}For any $L\in \R_+^3$ and any $0\leq q \leq 3$, set $V^q_3(L) = 1$. 
		For any $n\geq 4$ and $q\in \{0, \cdots, n\},$ we have
		\begin{eqnarray*}
			V^q_n(L) & =& \frac12
			\sum_{i=1}^{n-3}
			{n-4 \choose i-1}
			\sum_{p=\max(0, i+1+q-n)}^{\min(q,i+1)} 
			\\&&
			{q \choose p}{n-q \choose i+1-p}
			V_{i+2}^{p}(L)V_{n-i}^{q-p}(L)
			\left(	\frac{1}{n-1}i(n-i-2)	+ \frac{p}2\frac{L^2}{2\pi^2}\right).
		\end{eqnarray*}
	\end{corollary}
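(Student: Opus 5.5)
Corollary \ref{coror} follows from Theorem \ref{recu} by specializing to $L=(L,\dots,L,0,\dots,0)$, with $q$ entries equal to $L$, and sorting subsets by how many of the first $q$ indices they contain. Write $Q=\{1,\dots,q\}$. For a subset $I\subset\{1,\dots,n\}$ with $|I|=i+1$, set $p=|I\cap Q|$. The boundary lengths seen by the two factors are then $(L_{|I},0)$, with $p$ copies of $L$ and $i+2-p$ zeros, and $(L_{|I^c},0)$, with $q-p$ copies of $L$. The volumes $V_n(L)$ are symmetric in their arguments, since relabelling the boundaries gives an isomorphic moduli space. Hence
$$V_{i+2}(L_{|I},0)V_{n-i}(L_{|I^c},0)=V^p_{i+2}(L)V^{q-p}_{n-i}(L).$$
The number of such $I$ with a given $p$ is ${q\choose p}{n-q\choose i+1-p}$. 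The constraints $0\le p\le q$ and $0\le i+1-p\le n-q$ give exactly the stated range $\max(0,i+1+q-n)\le p\le\min(q,i+1)$. This handles the first term of Theorem \ref{recu} directly: it becomes $\frac12\sum_i{n-4\choose i-1}\sum_p{q\choose p}{n-q\choose i+1-p}\frac{i(n-i-2)}{n-1}V^p_{i+2}V^{q-p}_{n-i}$.

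For the second term, fix $I$ and sum over the ordered triples $(j,k,\ell)$ of distinct indices with $j\in I$ and $k,\ell\in I^c$. Since $L_j^2=L^2$ when $j\in Q$ and $0$ otherwise, only $j\in I\cap Q$ contribute, giving $p$ choices. For $(k,\ell)$ there are $(n-i-1)(n-i-2)$ ordered pairs of distinct elements in $I^c$, because $|I^c|=n-i-1$. Interchanging the sums, the second term equals
$$\frac{1}{4\pi^2}\sum_i{n-4\choose i-1}{n-1\choose 2}^{-1}\sum_p{q\choose p}{n-q\choose i+1-p}\,pL^2(n-i-1)(n-i-2)\,V^p_{i+2}V^{q-p}_{n-i}.$$

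The main obstacle is matching this coefficient with the claimed $\frac12\cdot\frac p2\frac{L^2}{2\pi^2}=\frac{pL^2}{8\pi^2}$. Using ${n-1\choose2}^{-1}=\frac{2}{(n-1)(n-2)}$, the coefficient obtained above is $\frac{pL^2}{2\pi^2}\cdot\frac{(n-i-1)(n-i-2)}{(n-1)(n-2)}$, which depends on $i$. So I would first recheck the counting convention: whether $(j,k,\ell)$ is meant to be ordered, and whether the $i$-dependent factor can be absorbed by symmetrizing the sum under $i\leftrightarrow n-2-i$ (equivalently $I\leftrightarrow I^c$). For the symmetrization, I would rewrite $\binom{n-4}{i-1}\binom{q}{p}\binom{n-q}{i+1-p}$ via multinomial identities and average the expression against its image under the swap. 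If the coefficients still fail to reduce to the constant $\frac{pL^2}{8\pi^2}$, I would conclude that the normalization in Theorem \ref{recu} (the precise weight attached to $L_j^2$ in the original derivation) must be revisited. In that case I would read off the correct coefficient from the proof of Theorem \ref{recu} rather than from its displayed statement. Finally, adding the two terms and checking the base case $V^q_3=1$ completes the argument.
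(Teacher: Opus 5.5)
\textbf{The gap, and why it cannot be closed.} Your reduction of the first term is correct. Your computation of the second term also exposes a real problem, not a bookkeeping slip. The gap in your proposal is that it ends in a contingency plan aimed at matching the stated constant $\frac{pL^2}{8\pi^2}$, and no argument can achieve that: the displayed corollary is false whenever $q\ge 1$ and $L>0$. The case $n=4$ settles it. By Theorem~\ref{mimi}, with $\int_{\overline{\bm}_{0,4}}\omega_{\mathrm{WP}}=2\pi^2$ and $\int_{\overline{\bm}_{0,4}}\psi_j=1$, one gets Mirzakhani's $2\pi^2+\frac12\sum_j L_j^2$, hence $V^q_4(L)=1+\frac{qL^2}{4\pi^2}$. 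The stated formula has only $i=1$ and all $V_3^{\cdot}=1$. Using $\sum_p\binom{q}{p}\binom{4-q}{2-p}=6$ and $\sum_p p\binom{q}{p}\binom{4-q}{2-p}=3q$, it gives $\frac12\bigl(2+\frac{3qL^2}{4\pi^2}\bigr)=1+\frac{3qL^2}{8\pi^2}$. There is a single value of $i$ here, so no averaging under $I\leftrightarrow I^c$ can change the outcome.

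\textbf{What the correct count gives.} Lemma~\ref{psik}, and hence Theorem~\ref{recu}, must be read with \emph{unordered} pairs $\{k,\ell\}$. It is the average over the $\binom{n-1}{2}$ such pairs of the fixed-pair relation $\psi_j=\sum_{j\in I,\ k,\ell\notin I}\Delta_I$. The ordered reading gives $2\psi_j$; at $n=4$ it would yield $V_4(L)=1+\frac{\sum_j L_j^2}{2\pi^2}$. So your coefficient must be halved. The correct specialization is the displayed formula with the bracket replaced by
\[
\frac{i(n-i-2)}{n-1}+\frac{pL^2}{2\pi^2}\cdot\frac{(n-i-1)(n-i-2)}{(n-1)(n-2)} .
\]
This reproduces $V_4^q(L)$ above, and also $V_5^1(L)=5+\frac{3L^2}{2\pi^2}+\frac{L^4}{16\pi^4}$ from Mirzakhani's $V_{0,5}$. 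Theorem~\ref{recu} needs no revision beyond this convention, and reading the coefficient off its proof just returns this $i$-dependent factor.

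\textbf{The paper's proof.} It reaches a constant coefficient only through an invalid step. After replacing the average by a fixed pair $k(j),\ell(j)$, it asserts $\sum_{|I|=i+1}\sum_{j\in I_q,\,k(j),\ell(j)\notin I}(\cdots)=\sum_{|I|=i+1}|I_q|(\cdots)$, silently dropping the constraint $k(j),\ell(j)\notin I$. Even its resulting display, with $\frac{L^2}{4\pi^2}\,p$, differs by a factor $2$ from the stated $\frac{L^2}{8\pi^2}\,p$; the Remark that follows matches the display, not the statement. Your counting is the correct one. The right end point of your argument is the corrected recursion above, not a proof of the corollary as displayed.
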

In the particular case where $L=0$, using Vandermonde identity, we recover a former result 
by Zograf:
\begin{corollary}~\cite{zograf1993weil}
	For all $n\geq 4$, 
	$$ 
	V_n = \frac12 \sum_{i=1}^{n-3} \frac{i(n-i-2)}{n-1}
	{n-4 \choose {i-1}}{n \choose {i+1}} V_{i+2}V_{n-i}.
	$$
\end{corollary}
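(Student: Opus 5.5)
This corollary is the specialization $L=0$ of Corollary~\ref{coror} (equivalently of Theorem~\ref{recu} with $q=0$), so I will deduce it directly rather than redo the geometry. In Theorem~\ref{recu} with $L=(0,\dots,0)$, the second sum has the factor $L_j^2=0$ and therefore vanishes. Every $V_{i+2}(L_{|I},0)$ and $V_{n-i}(L_{|I^c},0)$ becomes $V_{i+2}$ and $V_{n-i}$, since they are evaluated at zero lengths. This holds also for the boundary case $V_3=1$, by the convention stated in Theorem~\ref{recu}.

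The inner sum over subsets $I\subset\{1,\dots,n\}$ with $|I|=i+1$ now has a summand independent of $I$. It therefore contributes the factor $\binom{n}{i+1}$, which gives exactly
$$V_n=\frac12\sum_{i=1}^{n-3}\frac{i(n-i-2)}{n-1}\binom{n-4}{i-1}\binom{n}{i+1}V_{i+2}V_{n-i}.$$

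Alternatively, I can start from Corollary~\ref{coror} with $q=0$. Then $p$ ranges only over $p=0$, the term $\frac p2\frac{L^2}{2\pi^2}$ drops out, and $\binom{0}{0}\binom{n}{i+1}$ appears. This matches the Vandermonde remark in the text: for general $q$ at $L=0$, summing $\binom{q}{p}\binom{n-q}{i+1-p}$ over $p$ gives $\binom{n}{i+1}$.

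There is no real obstacle here. The only point to check is that the range $n\geq 4$ and the degenerate indices ($i=1$ or $i=n-3$, where a factor $V_3=1$ appears) are consistent with the conventions. All the content lies in Theorem~\ref{recu}, which is assumed.
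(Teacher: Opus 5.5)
Your proposal is correct and follows the paper's own route: specialize the $L$-recursion (Theorem~\ref{recu}, equivalently Corollary~\ref{coror}) at $L=0$, where the $L^2$-terms vanish and the sum over subsets collapses to the factor $\binom{n}{i+1}$. The paper phrases the last step as a Vandermonde summation over $p$. Your choice $q=0$, or the direct count of subsets, reduces that step to a trivial count, which is equally valid.
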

In~\cite{zograf1993weil} the author used $\frac{1}2\omega_{\mathrm{WP}}$ instead of the classical $\omega_{\mathrm{WP}}$. However, we defined $V_n(L)$ so that  $V_n:=V_n(0)$ is exactly equal to the quantity denoted by $v_n$ in~\cite{zograf1993weil}.

\subsection{A differential equation}

For any $L\in \R_+$, let 
\begin{equation}\label{hache1} h_1(x)= \sum_{n=3}^\infty \frac{V^1_n(L) }{(n-1)!(n-3)!}x^{n-1},
\end{equation}
be the generating function of the sequence $(V^1_n(L))_{n\geq 3}$,
where $V^1_n(L)$ has been defined by~(\ref{vqn}).
Similarly, let \begin{equation}\label{hache} h(x)=  \sum_{n=3}^\infty
\frac{V_n}{(n-1)!(n-3)!} x^{n-1}
\end{equation}
be the generating function of $(V_n)_n$, see~\cite[(0.5)]{kaufmann1996higher}. The recursion given by Corollary~\ref{coror} implies the following theorem. 
\begin{theorem}\label{h1}For any $L\in \R_+$, 
	the function $h_1$ satisfies the differential equation
	\beqe 
	xh_1''-h_1' & =&
	h_1''(xh'-h)
	+ \frac{L^2}{4\pi^2}(
	h_1''h +h_1'h')
	\eeqe
\end{theorem}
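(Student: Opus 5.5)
The plan is to specialize Corollary~\ref{coror} to $q=1$ and compare coefficients of $x^{n-2}$ on both sides of the differential equation. For $q=1$ the inner sum over $p$ has only two terms. The term $p=0$ contributes
\[
\binom{n-1}{i+1}V_{i+2}\,V^1_{n-i}(L)\,\frac{i(n-i-2)}{n-1},
\]
and the term $p=1$ contributes
\[
\binom{n-1}{i}V^1_{i+2}(L)\,V_{n-i}\Big(\frac{i(n-i-2)}{n-1}+\frac{L^2}{4\pi^2}\Big).
\]
Both are multiplied by $\frac12\binom{n-4}{i-1}$ and summed over $1\le i\le n-3$. Under $i\mapsto n-2-i$, the factor $i(n-i-2)$ is invariant. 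Also $\binom{n-4}{i-1}$ becomes $\binom{n-4}{n-3-i}=\binom{n-4}{i-1}$, and $\binom{n-1}{i+1}$ becomes $\binom{n-1}{n-1-i}=\binom{n-1}{i}$. The $p=0$ sum is therefore exactly the purely geometric part of the $p=1$ sum. So the two halves combine into one unhalved sum with the single product $V^1_{i+2}(L)V_{n-i}$, plus the $L^2$ term, which keeps its factor $\frac12$.

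Next I will compute the relevant coefficients from the definitions \eqref{hache1} and \eqref{hache}. Differentiating gives
\[
h_1'=\sum_{n\ge3}\frac{V^1_n(L)}{(n-2)!(n-3)!}x^{n-2},\qquad h_1''=\sum_{n\ge4}\frac{V^1_n(L)}{(n-3)!^2}x^{n-3}.
\]
Hence the coefficient of $x^{n-2}$ in $xh_1''-h_1'$ is $V^1_n(L)\,\frac{n-3}{(n-2)!(n-3)!}$. This coefficient vanishes for $n=3$, consistent with the recursion starting at $n=4$. Likewise, $xh'-h$ has coefficient $V_m\frac{m-2}{(m-1)!(m-3)!}$ at $x^{m-1}$, and $h'$ has coefficient $V_m/((m-2)!(m-3)!)$ at $x^{m-2}$. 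I will then set $a=i+2$ and $m=n-i$, so that $a+m=n+2$. With this indexing:
\begin{itemize}
\item the coefficient of $x^{n-2}$ in $h_1''(xh'-h)$ is $\sum_{a+m=n+2} V^1_aV_m\frac{m-2}{(a-3)!^2(m-1)!(m-3)!}$;
\item in $h_1''h$ it is $\sum_{a+m=n+2} \frac{V^1_aV_m}{(a-3)!^2(m-1)!(m-3)!}$;
\item in $h_1'h'$ it is $\sum_{a+m=n+2} \frac{V^1_aV_m}{(a-2)!(a-3)!(m-2)!(m-3)!}$.
\end{itemize}

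The core of the proof is to multiply the combined recursion by $\frac{n-3}{(n-2)!(n-3)!}$ and check that the result matches these coefficients term by term. I will rewrite
\[
\frac{n-3}{(n-2)!(n-3)!}\binom{n-4}{i-1}\binom{n-1}{i}\frac{i(n-i-2)}{n-1}
\]
in factorials. The $(n-3)$, $(n-4)!$, $(n-2)!$ and $(n-1)!$ cancel, leaving $\frac{i\,(m-2)}{(i-1)!\,(n-3-i)!\,i!\,(m-1)!}$. Since $i=a-2$ and $n-3-i=m-3$, this equals $\frac{m-2}{(a-3)!^2(m-1)!(m-3)!}$, which is the $h_1''(xh'-h)$ coefficient. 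For the $L^2$ term, the factor $\frac12\cdot\frac{1}{(a-3)!(m-3)!\,(a-2)!\,(m-1)!}\cdot\frac{L^2}{4\pi^2}$ has to be matched with half of the sum of the $h_1''h$ and $h_1'h'$ coefficients. I expect to use the identity $\frac{1}{(a-3)!(m-1)!}\big(\frac{1}{a-2}\cdot\ldots\big)$ together with the symmetrization $i\leftrightarrow n-2-i$ applied once more, to split the weight between the two products.

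This last matching is the step I expect to be the main obstacle. The $L^2$ part of the $p=1$ term is not symmetric in $a\leftrightarrow m$, so I must redistribute it between $h_1''h$ and $h_1'h'$ exactly. If it does not match directly, I will first try writing $\frac{1}{(a-2)!(m-1)!}$ as a combination of $\frac{a-2}{(a-2)!(m-1)!}$-type terms using $\frac{(m-2)+(a-2)}{(a-2)!(m-2)!}$. I will also recheck the binomial prefactors, in particular whether $\frac{p}{2}$ with $p=1$ gives $\frac{L^2}{4\pi^2}$ times $\frac12$. Finally, I will verify that the low-order boundary terms ($n=3,4$, i.e. the contributions of $V_3=V^1_3=1$) agree on both sides.
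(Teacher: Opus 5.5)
Your route is the paper's: specialize Corollary~\ref{coror} to $q=1$, symmetrize the $p=0$ term under $i\mapsto n-2-i$, and compare coefficients of $x^{n-2}$. Your geometric part is correct and reproduces the first display of the paper's proof. The gap is the $L^2$ term, and redistributing weight will not close it.

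First, you dropped a factor. The $L^2$ term carries no $\frac1{n-1}$, so $\frac{n-3}{(n-2)!(n-3)!}\binom{n-4}{i-1}\binom{n-1}{i}=\frac{n-1}{(a-3)!(a-2)!(m-3)!(m-1)!}$. Writing $n-1=(a-2)+(m-1)$ splits this at once into $\frac{1}{(a-3)!^2(m-1)!(m-3)!}+\frac{1}{(a-2)!(a-3)!(m-2)!(m-3)!}$. These are exactly the coefficients of $V^1_aV_m$ in $[x^{n-2}]h_1''h$ and $[x^{n-2}]h_1'h'$, so no second symmetrization is needed.

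Second, the normalization does not match. Corollary~\ref{coror} as stated gives the $p=1$ coefficient $\frac12\cdot\frac p2\cdot\frac{L^2}{2\pi^2}=\frac{L^2}{8\pi^2}$. Your computation therefore yields $\frac{L^2}{8\pi^2}(h_1''h+h_1'h')$, which is half of what the theorem asserts. This is the ``half'' you anticipated, and it cannot be removed. The paper reaches $\frac{L^2}{4\pi^2}$ only because it uses the last display in the proof of Corollary~\ref{coror}. There the $L^2$ term is $\frac{L^2}{4\pi^2}\sum_p p\binom qp\binom{n-q}{i+1-p}\cdots$ with no factor $\frac12$, which contradicts the corollary as stated.

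In fact neither normalization is correct, so the displayed equation has no proof for $L>0$: it fails at order $x^2$. Theorem~\ref{mimi} with $\int_{\overline{\mathcal M}_{0,4}}\psi_1=1$ gives $V^1_4(L)=1+\frac{L^2}{4\pi^2}$. Hence $[x^2](xh_1''-h_1')=\frac12V^1_4=\frac12+\frac{L^2}{8\pi^2}$. On the right, $h_1''=1+O(x)$, $xh'-h=\frac{x^2}2+O(x^3)$ and $h_1''h+h_1'h'=\frac32x^2+O(x^3)$ give $\frac12+\frac{3L^2}{8\pi^2}$.

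The error is in the proof of Corollary~\ref{coror}. Replacing $\sum_{j\in I_q,\,k(j),\ell(j)\notin I}$ by $|I_q|$ discards the constraint $k(j),\ell(j)\notin I$. If you keep it, with a fixed pair $k,\ell\neq 1$, the $L^2$ part of the $q=1$ recursion becomes $\frac{L^2}{4\pi^2}\sum_{i}\binom{n-4}{i-1}\binom{n-3}{i}V^1_{i+2}V_{n-i}$. This gives the value of $V^1_4$ above and $V^1_5(L)=5+\frac{3L^2}{2\pi^2}+\frac{L^4}{16\pi^4}$, matching Mirzakhani's $V_{0,5}$. With this weight, your coefficient comparison gives $\frac{L^2}{4\pi^2}\cdot\frac{1}{n-2}[x^{n-3}](h_1'h'')$ as the $L^2$ contribution to $[x^{n-2}]$. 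The resulting identity is $\frac{d}{dx}\bigl(xh_1''-h_1'-h_1''(xh'-h)\bigr)=\frac{L^2}{4\pi^2}h_1'h''$, not the stated equation.
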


If $L=0$, one has $h_1=h$, so that Theorem~\ref{h1} implies the known corollary:
\begin{corollary}\label{mazo}~\cite[(0.6)]{kaufmann1996higher}
	The function $h$ satisfies the nonlinear differential equation	$$xh''-h'=h''(xh'-h) .$$
\end{corollary}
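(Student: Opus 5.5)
The plan is to specialize Theorem~\ref{h1} at $L=0$; no new analysis is needed. First I will check that $h_1=h$ when $L=0$. By definition~\eqref{vqn}, $V^1_n(0)=V_n(0,\dots,0)=V_n$ for every $n\geq 3$, and for $n=3$ both conventions give $V^1_3(0)=V_3=1$. Comparing the coefficients of $x^{n-1}$ in~\eqref{hache1} and~\eqref{hache} then shows that the two power series coincide term by term: $h_1=h$, hence $h_1'=h'$ and $h_1''=h''$.

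Next I will substitute into the differential equation of Theorem~\ref{h1}. The additional term $\frac{L^2}{4\pi^2}(h_1''h+h_1'h')$ vanishes at $L=0$. Replacing $h_1$ by $h$ in what remains gives exactly
\[
xh''-h'=h''(xh'-h).
\]

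The only point needing care is that the equation is meaningful as an identity of formal power series, or of convergent ones if one prefers. Theorem~\ref{h1} is itself obtained by comparing coefficients using Corollary~\ref{coror}, so the specialization can be done at the level of coefficients. I therefore expect no real obstacle.

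As a consistency check, one could also derive the equation directly from Zograf's recursion, the $L=0$ corollary above. Multiplying that recursion by $x^{n-1}/((n-2)!(n-4)!)$ and summing over $n\geq 4$, the left side produces $xh''-h'$. The convolution on the right, after using ${n-4\choose i-1}{n\choose i+1}$ and the weight $i(n-i-2)/(n-1)$ to split the factorials, factors into the Cauchy product of the series for $h''$ and for $xh'-h$. That bookkeeping is routine but not required, since the specialization of Theorem~\ref{h1} already gives the result.
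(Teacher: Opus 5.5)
Your proposal is correct and is exactly the paper's argument: at $L=0$ one has $V^1_n(0)=V_n$ for all $n\geq 3$, so $h_1=h$ and the $\frac{L^2}{4\pi^2}$ term in Theorem~\ref{h1} vanishes. In your optional consistency check the multiplier should be $x^{n-2}/((n-2)!(n-4)!)$ rather than $x^{n-1}/((n-2)!(n-4)!)$, but this slip does not affect the main proof.
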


\section{Proofs of the results}

\subsection{Proof of the recursion for boundaries}

The moduli space $\bm_{0,n}(L)$ admits a Deligne--Mumford compactification $\overline{\bm}_{0,n}(L)$ and the Weil--Petersson form $\omega_{\mathrm{WP}}(L)$  extends to a closed form, still denoted by $\bm_{0,n}(L)$, on  $\overline{\bm}_{0,n}(L)$. In particular, since the boundary $\overline{\bm}_{0,n}(L)\setminus \bm_{0,n}(L)$ has measure $0$, the Weil--Petersson volume can be computed as an integral over $\overline{\bm}_{0,n}(L)$, with the advantage that $\overline{\bm}_{0,n}(L)$ is a compact oriented manifold admitting a fundamental class, and therefore one can use the homology/cohomology pairing to compute the volume. To be able to do this, one needs to understand the cohomology class of $\omega_{\mathrm{WP}}(L)$.

Mirzakhani \cite{mirzakhani2007weil} constructed a homeomorphism from the moduli space $\overline{\mathcal{M}}_{0,n}(L)$ to $\overline{\mathcal{M}}_{0,n}$. 
One can then transport the (extension of the) Weil--Petersson from $\overline{\mathcal{M}}_{0,n}(L)$ to $\overline{\mathcal{M}}_{0,n}$. 
The next results computes the cohomology class of $\omega_{\mathrm{WP}}(L)$ inside $H^2(\overline{\bm}_{0,n},\R).$
\begin{theorem}\label{mimi}\cite[Theorem 3.2]{mirzakhani2007weil} Let $n\geq 3$ and $L\in \R_+^n$. Then,
	$$[\omega_{\mathrm{WP}}(L)] = [\omega_{\mathrm{WP}}] +\frac{1}2\sum_{j=1}^n  L_j^2\psi_j \in H^2(\overline{\bm}_{0,n},\R).$$
\end{theorem}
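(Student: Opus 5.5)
The plan follows Mirzakhani's symplectic reduction argument and rests on the Duistermaat--Heckman theorem. We introduce the auxiliary space $\widehat{\bm}_{0,n}$ of genus $0$ hyperbolic surfaces with $n$ geodesic boundary components, each carrying a marked point, and with boundary lengths allowed to vary. Via Fenchel--Nielsen coordinates adapted to a pants decomposition, extended by the lengths $L_j$ and twist parameters $\tau_j$ at the boundaries, Wolpert's formula provides a natural symplectic form
\[
\widehat\omega=\sum_{\text{interior}} d\ell_\gamma\wedge d\tau_\gamma+\sum_{j=1}^n dL_j\wedge d\tau_j .
\]
The first step is to check that $\widehat\omega$ is independent of the chosen pants decomposition; this is Wolpert's theorem, applied to the doubled surface or with boundaries treated as extra curves. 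The torus $T=(S^1)^n$ acts by rotating the marked point on the $j$-th boundary, that is by $\tau_j\mapsto \tau_j+tL_j$ after normalising the period. This action is Hamiltonian with moment map $\mu=(L_1^2/2,\dots,L_n^2/2)$. The factor $\tfrac12 L_j^2$ is exactly what produces the coefficient in the statement.

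The second step is symplectic reduction. The level set $\mu^{-1}(L^2/2)$ is the $T$-bundle over $\bm_{0,n}(L)$ obtained by forgetting the boundary marked points, and the reduced form is precisely $\omega_{\mathrm{WP}}(L)$. The Duistermaat--Heckman theorem then says that, after identifying all reduced spaces with one fixed base through Mirzakhani's homeomorphism to $\overline{\bm}_{0,n}$, the class of the reduced form varies linearly in the moment value. Explicitly,
\[
[\omega_{\mathrm{WP}}(L)]=[\omega_{\mathrm{WP}}(0)]+\sum_j \tfrac{L_j^2}{2}\,c_1(\mathcal L_j),
\]
where $\mathcal L_j$ is the circle bundle attached to the $j$-th factor of $T$. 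Because $\omega_{\mathrm{WP}}(0)=\omega_{\mathrm{WP}}$ (the boundaries become cusps), it remains to identify $c_1(\mathcal L_j)$ with $\psi_j$. We will do this as the $L\to 0$ limit. Near a cusp, a small horocycle of fixed length with a marked point is equivalent to a unit tangent direction at the puncture, so the circle bundle becomes the unit circle bundle of the cotangent line at $x_j$, whose Euler class is $\psi_j$.

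The main obstacle is making this argument work on the compactification rather than on the open moduli space. The Duistermaat--Heckman statement must hold in $H^2(\overline{\bm}_{0,n},\R)$, which requires extending the $T$-bundle and its connection across the boundary divisors. We also need to check that the identification of $c_1(\mathcal L_j)$ with $\psi_j$ produces no correction terms supported on the boundary. Here I would use Wolpert's analysis of the Weil--Petersson form near nodes: pinching an interior curve decouples the twist of that curve from the boundary circles, so the bundles $\mathcal L_j$ extend as $S^1$-orbibundles over $\overline{\bm}_{0,n}$. In genus $0$ there are no orbifold subtleties, which gives the identity in $H^2(\overline{\bm}_{0,n},\R)$.
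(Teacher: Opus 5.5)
The paper does not prove this statement; it quotes it from Mirzakhani \cite{mirzakhani2007weil}, and your outline (torus action rotating the boundary marked points, moment map $(L_1^2/2,\dots,L_n^2/2)$, reduction to $\omega_{\mathrm{WP}}(L)$, Duistermaat--Heckman, then identifying the Euler classes with $\psi_j$) is essentially the argument of that cited source. Two steps need more than you give them: first, $L=0$ is not a regular value of $\mu$, so Duistermaat--Heckman only compares $[\omega_{\mathrm{WP}}(L)]$ with $[\omega_{\mathrm{WP}}(L')]$ for $L,L'\in(0,\infty)^n$, and you still need a separate argument (continuity of the classes as $L\to 0$, or a local normal form near the locus where the boundaries become cusps) to reach $[\omega_{\mathrm{WP}}]$; second, a tangent direction at $x_j$ lies in the unit circle bundle of $T_{x_j}$, whose Euler class for the complex orientation is $-\psi_j$, so the sign $+\psi_j$ rests on checking that the twist flow orients the fibres the opposite way, consistently with your Duistermaat--Heckman sign convention.
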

Recall that for any $i\in {1, \cdots, n}$, one defines $\psi_i\in H^2(\overline{\bm}_{0,n},\Q)$ to be the first Chern class on $\overline{\bm}_{0,n}$ of the holomorphic line bundle $\mathcal L_i$, whose fiber over a point $[C,x_1,\dots,x_n] \in \overline{\bm}_{0,n}$
is the cotangent line at $x_i$ to the
curve $[C,x_1,\dots,x_n]$.

\begin{definition}\label{dedef}
	Let $n\geq 3$. 
For any subset $I\subset \{1, \cdots, n\}$, 
let $\Delta_I$ be the divisor in $\overline{\mathcal M}_{0,n}$ whose generic point represents a one-nodal genus $0$ curve, with the $I$-marked points on one irreducible components and the $I^{c}$-marked points on the other one. 
\end{definition}
Notice that $\Delta_I\cong \overline{\mathcal M}_{I\cup\{n+1\}}\times \overline{\mathcal M}_{I^c\cup\{n+2\}}$, where $\overline{\mathcal M}_{I\cup\{n+1\}}$ stands for $\overline{\mathcal M}_{0,I\cup\{n+1\}}$ (and similarly for $\overline{\mathcal M}_{I^c\cup\{n+2\}}$).
In the isomorphism, the marked points $n+1$ and $n+2$ are identified to create the node attaching the rational curve containing the $I$-marked points to the rational curve containing the $I^c$ ones.

We now state  classical computations of Poincaré dual of the Weil--Petersson symplectic form and of the $\psi$-classes.
\begin{proposition}\label{zozo}\cite[p. 371]{zograf1993weil}
Let $n\geq 3$. The Poincaré dual of $[\omega_{\mathrm{WP}}]$ equals 
\begin{equation}\nonumber	
\mathrm{PD}([\omega_{\mathrm{WP}}]) = \frac{\pi^2}{(n-1)}\sum_{i=1}^{n-3}i(n-i-2) 
\sum_{\substack{I\subset \{1, \cdots, n\}\\|I| = i+1}} \Delta_I \in H^2(\overline{\bm}_{0,n},\R).
\end{equation}
		\end{proposition}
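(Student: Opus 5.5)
The plan is to reduce the statement to a purely algebro-geometric identity for the first Mumford--Morita--Miller class $\kappa_1$ on $\overline{\bm}_{0,n}$. Then we express $\kappa_1$ through boundary divisors using the standard genus-zero relations for $\psi$-classes.

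\emph{Step 1: from $\omega_{\mathrm{WP}}$ to $\kappa_1$.} I would invoke Wolpert's theorem that the extension of the Weil--Petersson form to $\overline{\bm}_{0,n}$ satisfies $[\omega_{\mathrm{WP}}]=2\pi^2\kappa_1$. Here $\kappa_1=\pi_*(\psi_{n+1}^2)$ for the forgetful map $\pi\colon\overline{\bm}_{0,n+1}\to\overline{\bm}_{0,n}$. On $\overline{\bm}_{0,n}$ one also has the comparison formula
$$\kappa_1=\sum_{j=1}^n\psi_j-\delta, \qquad \delta=\sum_{\{I,I^c\}}\Delta_I,$$
where $\delta$ is the total boundary, each divisor counted once. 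This follows from $\psi_j=\pi^*\psi_j+D_{j,n+1}$ and the pushforward of $\psi_{n+1}^2$, or can simply be quoted (Arbarello--Cornalba). So it suffices to show
$$\kappa_1=\sum_{\{I,I^c\}}\frac{(|I|-1)(|I^c|-1)}{n-1}\,\Delta_I .$$
Indeed, summing instead over all subsets $I$ with $|I|=i+1$ counts each divisor twice. Writing $|I|=i+1$ and $|I^c|=n-i-1$, this turns the prefactor $2\pi^2$ into the $\pi^2$ of the statement, with coefficient $i(n-i-2)/(n-1)$. The range $1\le i\le n-3$ is exactly the condition $|I|,|I^c|\ge2$.

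\emph{Step 2: boundary expression of $\psi_j$.} In genus zero, for any distinct $k,\ell\neq j$,
$$\psi_j=\sum_{j\in I,\;k,\ell\in I^c}\Delta_I .$$
This is obtained by pulling back from $\overline{\bm}_{0,\{j,k,\ell,m\}}\cong\mathbb P^1$, or is the standard relation. Since the left side does not depend on $(k,\ell)$, I would average over the $\binom{n-1}{2}$ choices of pairs. Then a divisor $\Delta_I$ with $j\in I$, $|I|=a$, $|I^c|=b$ appears in $\psi_j$ with coefficient $\binom{b}{2}/\binom{n-1}{2}$.

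\emph{Step 3: the count.} Summing over $j$, the divisor $\{I,I^c\}$ receives
$$\frac{a\,b(b-1)+b\,a(a-1)}{(n-1)(n-2)}=\frac{ab(a+b-2)}{(n-1)(n-2)}=\frac{ab}{n-1},$$
using $a+b=n$. Subtracting $1$ for $\delta$ gives $\frac{ab-n+1}{n-1}=\frac{(a-1)(b-1)}{n-1}$, as required. Finally, Poincaré duality on the compact orbifold $\overline{\bm}_{0,n}$ identifies the divisor classes with their duals.

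\emph{Main obstacle.} The only nontrivial input is Step 1. This means the extension of $\omega_{\mathrm{WP}}$ across the boundary as a closed current in the class $2\pi^2\kappa_1$, together with the formula $\kappa_1=\sum\psi_j-\delta$. I would take both from the literature (Wolpert; Arbarello--Cornalba) rather than reprove them. I would also double-check the normalization $2\pi^2$ against $V_4=1$. For $n=4$ the statement gives $\mathrm{PD}=\frac{\pi^2}{3}\cdot 6$ points $=2\pi^2$, which matches $\vol\bm_{0,4}=2\pi^2$.
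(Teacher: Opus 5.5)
Your argument is correct. The paper itself gives no proof of this proposition; it simply quotes Zograf. Your derivation is therefore a genuinely different, self-contained route rather than a reconstruction of something in the text.

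Via Wolpert's normalization $[\omega_{\mathrm{WP}}]=2\pi^2\kappa_1$, you reduce the statement to two ingredients. The first is the genus-zero identity $\kappa_1=\sum_j\psi_j-\delta$. The second is the averaged boundary expression for $\psi_j$, which is the formula the paper later uses as Lemma~\ref{psik}.

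The bookkeeping is right. Each divisor occurs twice in the statement's sum (as $I$ and as $I^c$), which turns $2\pi^2$ into $\pi^2$. The count $\frac{ab}{n-1}-1=\frac{(a-1)(b-1)}{n-1}$ is correct. Besides your $n=4$ check, there is an $n=5$ check: on $\overline{\mathcal M}_{0,5}$ your formula gives $\kappa_1=\frac12\sum_{|I|=2}\Delta_I$, hence $\int\kappa_1^2=\frac12\cdot 10=5=V_5$.

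What your route buys is structural. It shows that Corollary~\ref{coroWP} is simply $\mathrm{PD}[\omega_{\mathrm{WP}}(L)]=\sum_j\bigl(2\pi^2+\tfrac12L_j^2\bigr)\psi_j-2\pi^2\delta$. So Zograf's coefficients $i(n-i-2)/(n-1)$ and the $L_j^2$-terms come out of one and the same averaging, whereas the paper's citations treat them as independent inputs.

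Two small points.

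First, your parenthetical justification of $\kappa_1=\sum_j\psi_j-\delta$ via $\psi_j=\pi^*\psi_j+D_{j,n+1}$ is incomplete. Writing $\psi_{n+1}=K_\pi+\sum_jD_{j,n+1}$ gives $\pi_*\psi_{n+1}^2=\pi_*K_\pi^2+\sum_j\psi_j$. You still need Mumford's $\pi_*K_\pi^2=12\lambda_1-\delta$, with $\lambda_1=0$ in genus zero. So quoting Arbarello--Cornalba is the right call.

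Second, in Step~2 the sum runs over boundary divisors ($|I|\ge 2$), and the average is over \emph{unordered} pairs $\{k,\ell\}$, as you do. That is what makes the factor $\binom{n-1}{2}^{-1}$ correct. Read over ordered pairs, the display in Lemma~\ref{psik} would give $2\psi_j$.
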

\begin{remark} In~\cite{zograf1993weil}, the Weil--Petersson form is one half times $\omega_{\mathrm{WP}}$, hence the factor $2$ difference with his expression.
\end{remark}

\begin{lemma}\label{psik} \cite[Proposition 2.13]{zvonkine2012introduction}
Let $n\geq 3$. For any $j\in \{1, \cdots, n\}$, the Poincaré dual of $\psi_i$ equals 
$$\mathrm{PD}(\psi_j) = 
{n-1 \choose 2}^{-1}
\sum_{\substack{k,\ell\in  \{1, \cdots n\}\\
		j\neq k, k\neq \ell, j\neq \ell
}}
\sum_{\substack{I\subset \{1, \cdots, n\}\\   j\in I,k\in I^c, \ell\in I^c}}\Delta_I
\in H^2(\overline{\bm}_{0,n},\R).
$$  
\end{lemma}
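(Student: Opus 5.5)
The plan is to prove a sharper statement and then average. For a fixed ordered pair $(k,\ell)$ of indices distinct from $j$ and from each other, I would establish
\begin{equation*}
\mathrm{PD}(\psi_j)=\sum_{\substack{I\subset\{1,\cdots,n\}\\ j\in I,\ k\in I^c,\ \ell\in I^c}}\Delta_I \in H^2(\overline{\bm}_{0,n},\R).
\end{equation*}
The lemma then follows by summing over the admissible pairs and dividing by their number. There are ${n-1\choose 2}$ unordered pairs $\{k,\ell\}\subset\{1,\cdots,n\}\setminus\{j\}$. The summand is symmetric in $k,\ell$, so the double sum in the statement should be read over unordered pairs. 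If it is read over ordered pairs, the normalising constant must be $(n-1)(n-2)$ instead. I would state this convention explicitly.

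\textbf{Step 1.} Build a meromorphic section of $\mathcal L_j$. On a smooth marked curve $(C,x_1,\dots,x_n)$ with $C\cong\mathbb P^1$, there is a unique meromorphic $1$-form $\eta_{k\ell}$ with simple poles only at $x_k$ and $x_\ell$, with residues $+1$ and $-1$. In an affine coordinate it is $\mathrm{d}z/(z-x_k)-\mathrm{d}z/(z-x_\ell)$. Its value $\eta_{k\ell}(x_j)$ is nonzero and lies in $T^*_{x_j}C$, so it defines a holomorphic, nowhere vanishing section $s$ of $\mathcal L_j$ over the open part $\bm_{0,n}$.

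\textbf{Step 2.} Extend $s$ to $\overline{\bm}_{0,n}$ and locate its zeros. On a stable nodal genus-$0$ curve, $\eta_{k\ell}$ is replaced by the unique stable differential that has the prescribed poles at $x_k,x_\ell$ and opposite residues at the two branches of each node. Because the dual graph is a tree, this form is nonzero exactly on the components lying on the path between the component of $x_k$ and the component of $x_\ell$, and it vanishes identically on every other component. Hence the extended section $s$ vanishes exactly where $x_j$ sits on a component off that path. Equivalently, it vanishes on the union of the divisors $\Delta_I$ with $j\in I$ and $k,\ell\in I^c$. Since each boundary divisor is $\Delta_I=\Delta_{I^c}$, the conditions $j\in I$ and $k,\ell\in I^c$ select each such divisor exactly once.

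\textbf{Step 3 (main obstacle).} Show that $s$ vanishes to order exactly one along each such $\Delta_I$. I would use the local model of the universal curve near a node, $uv=t$, where $t$ is a local equation of $\Delta_I$. On the side that does not contain $x_k,x_\ell$, the form restricts to something like $t\,\mathrm{d}v/v^2$ up to units. Evaluating it at $x_j$ in the coordinate $v$ therefore gives a function of order exactly $t^1$. A second route is to check the identity by restricting both sides to the test curves $\overline{\bm}_{0,4}\hookrightarrow\overline{\bm}_{0,n}$. There $\psi_j$ has degree $1$, and exactly one boundary point of $\overline{\bm}_{0,4}$ separates $j$ from $\{k,\ell\}$, which confirms the multiplicity.

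\textbf{Step 4.} Conclude and average. Since $c_1(\mathcal L_j)$ is Poincaré dual to the zero divisor of a holomorphic section, Steps 1--3 give the fixed-pair formula. Averaging it over the ${n-1\choose 2}$ unordered pairs $\{k,\ell\}$ gives the lemma.
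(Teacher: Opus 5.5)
Your proof is correct and follows the standard argument for this result, which the paper does not prove but quotes from Zvonkine: the residue-normalised differential with poles at $x_k,x_\ell$, evaluated at $x_j$, gives a section of $\mathcal L_j$ vanishing to first order exactly along the $\Delta_I$ separating $j$ from $\{k,\ell\}$, and one then averages over pairs. Your remark on the normalisation is also right and worth keeping: the sum must run over unordered pairs $\{k,\ell\}$, since over ordered pairs the right-hand side would be $2\psi_j$, and this is the reading under which the paper's Theorem~\ref{recu} gives the correct value $V_4(L,0,0,0)=1+L^2/(4\pi^2)$.
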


\begin{corollary}
\label{coroWP}Let $n\geq 3$ and $L\in \R_+^n$. Then,
\begin{eqnarray*}
\mathrm{PD}[\omega_{\mathrm{WP}}(L)] &=& 
\frac{\pi^2}{(n-1)}\sum_{i=1}^{n-3}i(n-i-2) 
\sum_{\substack{I\subset \{1, \cdots, n\}\\|I| = i+1}} \Delta_I
\\ &&
 +\frac{1}2\sum_{j=1}^n  L_j^2
 {n-1 \choose 2}^{-1}
 \sum_{\substack{k,\ell\in  \{1, \cdots n\}\\
 		j\neq k, k\neq \ell, j\neq \ell
 }}
 \sum_{\substack{I\subset \{1, \cdots, n\}\\   j\in I,k\in I^c, \ell\in I^c}}\Delta_I \in H^2(\overline{\bm}_{0,n},\R).
\end{eqnarray*}
\end{corollary}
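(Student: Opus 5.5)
The plan is to combine the three preceding results by linearity. Poincaré duality $H^2(\overline{\bm}_{0,n},\R)\to H_{2(n-4)}(\overline{\bm}_{0,n},\R)$ is a linear isomorphism, since $\overline{\bm}_{0,n}$ is a compact complex manifold of complex dimension $n-3$. The real-dimension bookkeeping works out because divisors have real codimension $2$, so $\mathrm{PD}$ sends $H^2$ to $H_{2(n-4)}$, the degree in which the classes $\Delta_I$ live.

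First, Theorem~\ref{mimi} gives the cohomology class of $\omega_{\mathrm{WP}}(L)$, transported to $\overline{\bm}_{0,n}$ via Mirzakhani's homeomorphism, as
$$[\omega_{\mathrm{WP}}(L)]=[\omega_{\mathrm{WP}}]+\tfrac12\sum_{j=1}^n L_j^2\psi_j.$$
Applying $\mathrm{PD}$ and using linearity yields
$$\mathrm{PD}[\omega_{\mathrm{WP}}(L)]=\mathrm{PD}[\omega_{\mathrm{WP}}]+\tfrac12\sum_j L_j^2\,\mathrm{PD}(\psi_j).$$

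Second, substitute Proposition~\ref{zozo} for the first term and Lemma~\ref{psik}, applied to each $j$, for the second. Written out, this is exactly the displayed formula.

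The only points needing care are conventions, not mathematics. One must check that Proposition~\ref{zozo} is stated for the classical $\omega_{\mathrm{WP}}$, which the Remark following it confirms: the factor $2$ relative to Zograf has already been absorbed. One must also check that the normalization of $\omega_{\mathrm{WP}}$ in Theorem~\ref{mimi} matches this one. Both statements use the same $\omega_{\mathrm{WP}}$, so the identity follows directly. The sum in Lemma~\ref{psik} should be read with the index $j$ fixed, so that no double counting arises when it is placed inside $\sum_j$.

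I expect no real obstacle. The only delicate step is the normalization check just described, together with making sure the homeomorphism of~\cite{mirzakhani2007weil} is the one under which Theorem~\ref{mimi}'s identity holds in $H^2(\overline{\bm}_{0,n},\R)$, which is how that theorem is stated.
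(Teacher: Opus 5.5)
Your proposal is correct and follows the paper's own argument. The paper also states the corollary as a direct consequence of applying Poincaré duality linearly to Theorem~\ref{mimi}, then substituting Proposition~\ref{zozo} for the first term and Lemma~\ref{psik} (for each fixed $j$) for the $\psi_j$ terms.
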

\begin{preuve}
	This is a direct consequence of Theorem~\ref{mimi}, Proposition~\ref{zozo} and Lemma~\ref{psik}.
	\end{preuve}
The following three lemmas explains how the divisors $\Delta_I$'s intersect each others. More details on the intersection ring of $\overline{\bm}_{0,n}$ can be found in \cite{Keel1992}.
\begin{lemma}[Geometric intersection of boundary divisors]\label{geometric} Let $I$ and $J$ be subsets of $\{1,\dots n\}$.  Then $\Delta_I\cap \Delta_J\neq\emptyset$ if and only if $I\subseteq J$ or $I^c\subseteq J$ or $I\subseteq J^c$ or $I^c\subseteq J^c$.
\end{lemma}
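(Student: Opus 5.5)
We argue with the description of points of $\overline{\mathcal M}_{0,n}$ as stable curves: a point is a tree $C$ of smooth rational curves carrying the $n$ distinct marked points away from the nodes, with every component having at least three special points (marked points or nodes). To such a curve we attach its dual tree $T_C$. Its vertices are the components, its edges are the nodes, and each marking is a leg attached to the vertex of the component containing it. Removing an edge $e$ disconnects $T_C$ into two subtrees, so $e$ induces a partition $\{S_e,S_e^c\}$ of $\{1,\dots,n\}$. Stability forces $|S_e|,|S_e^c|\geq 2$.

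The first step characterizes points of $\Delta_I$:
\[ [C]\in\Delta_I \iff \text{some edge } e \text{ of } T_C \text{ has } \{S_e,S_e^c\}=\{I,I^c\}. \]
We use the description of $\Delta_I$ recalled after Definition \ref{dedef}: it is the image of the gluing map $\overline{\mathcal M}_{I\cup\{n+1\}}\times\overline{\mathcal M}_{I^c\cup\{n+2\}}\to\overline{\mathcal M}_{0,n}$, which identifies $n+1$ with $n+2$. This map is proper, so its image is closed and equals the closure of the generic locus.

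For the forward direction, take a point in the image. It is obtained by gluing a stable curve marked by $I\cup\{n+1\}$ to one marked by $I^c\cup\{n+2\}$. The new node is an edge whose removal separates exactly $I$ from $I^c$. Conversely, suppose an edge $e$ has this property. Normalizing the corresponding node splits $C$ into two connected stable curves, marked by $I\cup\{\bullet\}$ and $I^c\cup\{\bullet\}$. Hence $[C]$ lies in the image of the gluing map.

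The second step is the \emph{only if} direction. Suppose $[C]\in\Delta_I\cap\Delta_J$. By the first step there are edges $e,f$ of $T_C$ realizing the splits $\{I,I^c\}$ and $\{J,J^c\}$.
\begin{itemize}
\item If $e=f$, then $\{I,I^c\}=\{J,J^c\}$, so $I=J$ or $I=J^c$, and one of the four inclusions holds.
\item If $e\neq f$, removing both edges from the tree leaves three connected pieces. The piece on the side of $e$ away from $f$ carries exactly one of $I$ or $I^c$, say $A\in\{I,I^c\}$. Since that piece lies entirely within one side of $f$, $A\subseteq J$ or $A\subseteq J^c$.
\end{itemize}
This gives the stated condition. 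This tree-combinatorics argument is the heart of the proof, and I expect it to be the main point requiring care, specifically making the first-step characterization rigorous for arbitrary degenerate points, not just generic ones.

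The third step is the \emph{if} direction. Since $\Delta_I=\Delta_{I^c}$ and $\Delta_J=\Delta_{J^c}$, we may replace $I$ by $I^c$ and $J$ by $J^c$ as needed, and so assume $I\subseteq J$. If $I=J$, the intersection is $\Delta_I$, which is nonempty. Otherwise we build a chain of three $\mathbb P^1$'s $A-B-C$:
\begin{itemize}
\item $A$ carries the markings $I$ and has $|I|+1\geq 3$ special points;
\item $B$ carries $J\setminus I\neq\emptyset$ and two nodes, so it has at least $3$ special points;
\item $C$ carries $J^c$ and has $|J^c|+1\geq 3$ special points.
\end{itemize}
This curve is stable. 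The node $A\cap B$ realizes the split $\{I,I^c\}$ and the node $B\cap C$ realizes $\{J,J^c\}$. By the first step the curve lies in $\Delta_I\cap\Delta_J$.
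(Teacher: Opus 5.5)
Your proof is correct. The paper gives no proof of this lemma: it is quoted as a standard fact, with only a pointer to Keel for the intersection theory of $\overline{\mathcal M}_{0,n}$. There is therefore no competing argument to compare against. What you give is the standard justification, via the stratification of $\overline{\mathcal M}_{0,n}$ by dual stable trees.

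Your first step is the real input: a point lies in $\Delta_I$ iff its dual tree has an edge realizing the split $\{I,I^c\}$. Handling it through properness of the gluing map is fine, once you say why the closed image equals the closure of the generic locus. The reason is that $\mathcal M_{0,I\cup\{n+1\}}\times\mathcal M_{0,I^c\cup\{n+2\}}$ is dense in the irreducible source. The two-edge argument in a tree and the explicit three-component chain then settle the two directions cleanly.

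You should state one hypothesis explicitly. In the stability check of the chain you use $|I|\ge 2$ and $|J^c|\ge 2$, i.e. that $\Delta_I$ and $\Delta_J$ are genuine boundary divisors, $2\le |I|,|J|\le n-2$. This is not a gap in your argument, but the statement needs it. If $|I|\le 1$, the locus $\Delta_I$ is empty while $I\subseteq J$ can still hold, so the \emph{if} direction would fail.
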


\begin{lemma}[Transverse intersection of boundary divisors]\label{transverse} Let $I$ and $J$ be subsets of $\{1,\dots n\}$. Then,
	$$\Delta_I\cap \Delta_J=
	\left\lbrace\begin{array}{ll} \Delta_{I\cup\{n+1\}}\times\overline{\mathcal{M}}_{J^c\cup\{n+2\}}  & \text{ if } I\subset J\\
	\Delta_{I^c\cup\{n+1\}}\times\overline{\mathcal{M}}_{J^c\cup\{n+2\}}  & \text{ if } I^c\subset J\\\
	\overline{\mathcal{M}}_{J\cup\{n+1\}}\times \Delta_{I\cup\{n+2\}} & \text{ if } I\subset J^c\\\
	\overline{\mathcal{M}}_{J\cup\{n+1\}}\times \Delta_{I^c\cup\{n+2\}} & \text{ if } I^c\subset J^c.
	\end{array}\right.
	$$
	Here there is a slight, classic abuse of notation, since the divisors live in different moduli spaces.
\end{lemma}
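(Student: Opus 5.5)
The plan is to work entirely inside the gluing isomorphism $\Delta_J\cong\overline{\mathcal M}_{J\cup\{n+1\}}\times\overline{\mathcal M}_{J^c\cup\{n+2\}}$ recalled after Definition~\ref{dedef}, and to describe which points of $\Delta_J$ also lie in $\Delta_I$. Throughout I assume $I\neq J$ and $I\neq J^c$ (equivalently $\Delta_I\neq\Delta_J$), since otherwise one is computing a self-intersection, which is not transverse. I also use $\Delta_I=\Delta_{I^c}$ freely.

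\textbf{Reduction to one case.} Replacing $I$ by $I^c$ leaves $\Delta_I$ unchanged. Exchanging the roles of $J$ and $J^c$ swaps the two factors of the product. Hence the four cases of the statement reduce to the first one, $I\subset J$ with $I\subsetneq J$; the other three follow by these relabelings. I will therefore treat only this case in detail.

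\textbf{Set-theoretic description.} A stable genus $0$ curve $C$ in $\Delta_J$ has a distinguished node $p$ separating the $J$-points from the $J^c$-points. Normalizing $C$ at $p$ gives a pair $(C_1,C_2)$ with $C_1$ carrying $J\cup\{n+1\}$ and $C_2$ carrying $J^c\cup\{n+2\}$. Since $C$ is a tree of $\mathbb P^1$'s, $C\in\Delta_I$ if and only if some node of $C$ separates the $I$-points from the $I^c$-points. This node cannot be $p$, because $I\neq J$. Because $I\subset J$, the node must lie on the $C_1$ side: a node of $C_2$ has all of $J$, and hence all of $I$, on one side, together with points of $J^c$. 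On $C_1$, such a node separates $I$ from $(J\setminus I)\cup\{n+1\}$, the branch point $n+1$ going with $I^c$. Conversely, any such node of $C_1$ gives a node of $C$ separating $I$ from $I^c$. So $\Delta_I\cap\Delta_J$ is the product of the corresponding boundary divisor of $\overline{\mathcal M}_{J\cup\{n+1\}}$ (written $\Delta_{I\cup\{n+1\}}$ in the statement, with the abuse of notation indicated there) with the whole factor $\overline{\mathcal M}_{J^c\cup\{n+2\}}$.

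\textbf{Transversality.} This is the step needing the most care, though it is standard. At a curve $C$ with nodes $p$ and $q$, where $q$ is the node separating $I$ from $I^c$, the deformation theory of nodal curves gives local coordinates on $\overline{\mathcal M}_{0,n}$ in which the smoothing parameters $t_p$ and $t_q$ of the two nodes are independent coordinates. Locally $\Delta_J=\{t_p=0\}$ and $\Delta_I=\{t_q=0\}$, so the boundary is a normal crossings divisor (Keel~\cite{Keel1992}). Two distinct branches therefore meet transversally, and the scheme-theoretic intersection is reduced. It coincides with the set computed above, which is what the lemma asserts.
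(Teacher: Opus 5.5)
The paper gives no proof of this lemma; it is quoted as a standard fact with a pointer to Keel~\cite{Keel1992}. Your argument is the standard one and is correct: you work inside the gluing isomorphism for $\Delta_J$, locate the second node on the $C_1$ side, and use the normal-crossings structure of the boundary to get reducedness. The step that does not hold as written is your final identification of the divisor you found with the printed one.

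For $I\subsetneq J$ you correctly show that the extra node separates $I$ from $(J\setminus I)\cup\{n+1\}$. So the relevant divisor of $\overline{\mathcal M}_{J\cup\{n+1\}}$ is $\Delta_I=\Delta_{(J\setminus I)\cup\{n+1\}}$. Read with Definition~\ref{dedef} in $\overline{\mathcal M}_{J\cup\{n+1\}}$, the printed $\Delta_{I\cup\{n+1\}}$ instead separates $I\cup\{n+1\}$ from $J\setminus I$. It therefore corresponds to $\Delta_J\cap\Delta_{J\setminus I}$, not to $\Delta_J\cap\Delta_I$.

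For instance, take $n=6$, $J=\{1,2,3,4\}$, $I=\{1,2\}$. Your locus is $\{1,2\}\,|\,\{3,4,7\}$ in $\overline{\mathcal M}_{\{1,2,3,4,7\}}$, whereas $\Delta_{\{1,2,7\}}$ is $\{1,2,7\}\,|\,\{3,4\}$. If $|J\setminus I|=1$, the printed divisor is not even a boundary divisor.

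The ``abuse of notation'' mentioned in the statement concerns only the ambient space; it cannot move $n+1$ to the other side. Your relabelling reduction also propagates your correct formula, not the printed one, to the other three cases. So you should say explicitly that what you prove is
\[
\Delta_I\cap\Delta_J=\Delta_{(J\setminus I)\cup\{n+1\}}\times\overline{\mathcal M}_{J^c\cup\{n+2\}}\qquad (I\subsetneq J),
\]
and correspondingly:
\begin{itemize}
\item $\Delta_{(J\cap I)\cup\{n+1\}}\times\overline{\mathcal M}_{J^c\cup\{n+2\}}$ for $I^c\subsetneq J$;
\item $\overline{\mathcal M}_{J\cup\{n+1\}}\times\Delta_{(J^c\setminus I)\cup\{n+2\}}$ for $I\subsetneq J^c$;
\item $\overline{\mathcal M}_{J\cup\{n+1\}}\times\Delta_{(J^c\cap I)\cup\{n+2\}}$ for $I^c\subsetneq J^c$.
\end{itemize}
In each case the node label sits on the side opposite to $I$ (resp.\ $I^c$), contrary to the literal formula. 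State this corrected version rather than asserting that your answer ``is what the lemma asserts''.
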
 

For any subset $I\subset \{1, \cdots, n\}$, 
let $$\iota_I : \overline{\mathcal M}_{I\cup\{n+1\}}\times \overline{\mathcal M}_{I^c\cup\{n+2\}}\hookrightarrow \overline{\mathcal M}_{0,n}.$$ This induces the restriction map $$\iota_I^*:H^2( \overline{\mathcal M}_{0,n})\rightarrow H^2(\overline{\mathcal M}_{I\cup\{n+1\}}\times \overline{\mathcal M}_{I^c\cup\{n+2\}}).$$ 
By K\"unneth decomposition, we have 
	\begin{eqnarray*}
		H^2(\overline{\mathcal M}_{I\cup\{n+1\}}\times \overline{\mathcal M}_{I^c\cup\{n+2\}})&\cong &H^2(\overline{\mathcal M}_{I\cup\{n+1\}})\oplus H^2( \overline{\mathcal M}_{I^c\cup\{n+2\}})
		\oplus 
		\\&&
		H^1(\overline{\mathcal M}_{I\cup\{n+1\}})\otimes H^1(\overline{\mathcal M}_{I^c\cup\{n+2\}})
	\end{eqnarray*}
By a standard abuse of notation, we denote $\psi_{n+1}$ and $\psi_{n+1}$ the classes in 	$H^2(\overline{\mathcal M}_{I\cup\{n+1\}}\times \overline{\mathcal M}_{I^c\cup\{n+2\}})$ induced by the corresponding $\psi$-classes in $H^2(\overline{\mathcal M}_{I\cup\{n+1\}})$ and $H^2( \overline{\mathcal M}_{I^c\cup\{n+2\}})$ under K\"unneth decomposition.
\begin{lemma}[Auto-intersection of boundary divisors]\cite[Theorem 3.22]{zvonkine2012introduction}\label{auto} Let $I$ be a subset of $\{1,\dots n\}$.  Then,
	$\iota_I^*\Delta_I=-\psi_{n+1}-\psi_{n+2}$.
\end{lemma}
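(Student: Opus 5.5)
The plan is to compute $\iota_I^*[\Delta_I]$ through the normal bundle of the embedded divisor. Write $D=\Delta_I$ and identify it, via $\iota_I$, with $\overline{\mathcal M}_{I\cup\{n+1\}}\times \overline{\mathcal M}_{I^c\cup\{n+2\}}$.

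\textbf{Step 1: self-intersection formula.} Since $\overline{\bm}_{0,n}$ is a smooth projective variety and $D$ is a smooth divisor, $\iota_I^*\mathcal O(D)\cong N_{D}$. Hence
$$\iota_I^*\mathrm{PD}^{-1}(\Delta_I)=c_1(N_D).$$

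\textbf{Step 2: identify the normal bundle.} I would use the local deformation theory of a node. Near a generic point of $D$, the curve has a node $xy=0$, where $x$ is a local coordinate at the marked point $n+1$ on the first component and $y$ is a local coordinate at $n+2$ on the second. The versal smoothing parameter is $t=xy$, and $D=\{t=0\}$ locally. So $t$ is a coordinate transverse to $D$, naturally an element of $T_{n+1}\otimes T_{n+2}$, where $T_{n+1}$ and $T_{n+2}$ are the tangent lines at the two branches. Rescaling $x\mapsto \lambda x$ rescales $t$ linearly, which confirms this. Thus
$$N_D\cong \mathcal L_{n+1}^{\vee}\boxtimes \mathcal L_{n+2}^{\vee},$$
and the standard Deligne--Mumford/Knudsen construction of the boundary gluing map, as in \cite{zvonkine2012introduction}, makes this global over $D$.

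\textbf{Step 3: take Chern classes.} Taking first Chern classes and using the Künneth convention,
$$c_1(N_D)=-\psi_{n+1}-\psi_{n+2}.$$
Both summands lie in the $H^2\oplus H^2$ part of the decomposition. There is no $H^1\otimes H^1$ contribution in any case, since $H^1(\overline{\bm}_{0,m})=0$.

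\textbf{Main obstacle.} The delicate point is making the identification of $N_D$ global and canonical, including at points of $D$ that meet other boundary strata. I would handle this with the gluing map $\overline{\bm}_{0,I\cup\{\star\}}\times\overline{\bm}_{0,I^c\cup\{\bullet\}}\to\overline{\bm}_{0,n}$, which is an isomorphism onto $D$ in genus zero so that no automorphism factors arise, together with the universal-curve description of the smoothing parameter.

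As a fallback, restrict both sides to curves in $D$ and check degrees. Since $H^2$ of each factor is spanned by boundary divisors, it is enough to pair both sides with test curves. These pairings can be computed using Lemma~\ref{psik} and the projection formula.
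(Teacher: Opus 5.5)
Your argument is correct: $\iota_I^*\mathcal O(\Delta_I)\cong N_{\Delta_I}$, and the smoothing parameter $t=xy$ transforms like $dx\otimes dy$, so the conormal line is $\mathcal L_{n+1}\boxtimes\mathcal L_{n+2}$, $N_{\Delta_I}\cong\mathcal L_{n+1}^\vee\boxtimes\mathcal L_{n+2}^\vee$, and taking $c_1$ gives $-\psi_{n+1}-\psi_{n+2}$. The paper gives no proof of its own and simply quotes the result from \cite{zvonkine2012introduction}, which rests on this same standard normal-bundle computation, so your route matches the cited one. Your test-curve fallback is unnecessary, and as sketched it would first need a Keel relation to move $\Delta_I$ off itself to avoid circularity.
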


\begin{lemma}[Restriction of the Weil--Petersson form to a boundary divisor]\label{boundary} 
	Let $I\subset \{1, \cdots, n\}$. Then,
	$$\iota_I^*[\omega_{\mathrm{WP}}(L)]= [\omega_{\mathrm{WP}}(L_{|I},0)]+[\omega_{\mathrm{WP}}(L_{|I^c},0)].$$
	The same holds for $\omega_{\mathrm{WP}}(i\theta)$.
\end{lemma}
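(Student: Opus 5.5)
The plan is to reduce the statement to a cohomological identity on $\overline{\bm}_{0,n}$ and check it there. By Theorem~\ref{mimi}, under Mirzakhani's identification $\overline{\bm}_{0,n}(L)\cong\overline{\bm}_{0,n}$,
\[
[\omega_{\mathrm{WP}}(L)]=[\omega_{\mathrm{WP}}]+\tfrac12\sum_{j} L_j^2\psi_j .
\]
Its pullback by $\iota_I$ is therefore
\[
\iota_I^*[\omega_{\mathrm{WP}}]+\tfrac12\sum_{j} L_j^2\,\iota_I^*\psi_j .
\]
The target classes are
\[
[\omega_{\mathrm{WP}}(L_{|I},0)]=[\omega_{\mathrm{WP}}]+\tfrac12\sum_{j\in I}L_j^2\psi_j
\]
on $\overline{\mathcal M}_{I\cup\{n+1\}}$, and the analogous class on the other factor. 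The new marked point carries length $0$, so its $\psi$-class does not appear. It thus suffices to prove two restriction statements:
\[
\iota_I^*[\omega_{\mathrm{WP}}]=[\omega_{\mathrm{WP}}]\otimes1+1\otimes[\omega_{\mathrm{WP}}],
\qquad
\iota_I^*\psi_j=\psi_j \text{ on the factor containing } j .
\]
Since $\overline{\bm}_{0,m}$ has no odd cohomology, the Künneth term $H^1\otimes H^1$ vanishes. Hence all these identities are equalities in $H^2$ of the two factors separately.

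For the $\psi$-classes, I would use the geometric definition. Over a point of $\Delta_I$, the cotangent line at $x_j$ for $j\in I$ is the cotangent line at $x_j$ of the component carrying the $I$-points. So $\mathcal L_j$ restricts to the pullback of the corresponding tautological bundle from $\overline{\mathcal M}_{I\cup\{n+1\}}$, and $\iota_I^*\psi_j=\psi_j\otimes 1$. The symmetric statement holds for $j\in I^c$.

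For the Weil--Petersson class, I would use Wolpert's result that $\omega_{\mathrm{WP}}$ extends to $\overline{\bm}_{0,n}$ and that its restriction to a boundary stratum is the Weil--Petersson form of the normalized, cusped surfaces. The node opens into two cusps, and the Fenchel--Nielsen length and twist of the pinched curve drop out. Hence the restriction is the sum of the pullbacks from the two factors.

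Alternatively, I would check this cohomologically. Use Proposition~\ref{zozo}, or better the formula $[\omega_{\mathrm{WP}}]=2\pi^2\kappa_1$ together with the standard restriction $\iota_I^*\kappa_1=\kappa_1\otimes1+1\otimes\kappa_1$. This restriction follows because the universal curve over $\Delta_I$ is the union of the two universal curves glued along sections, and $\kappa_1$ is the pushforward of $\psi_{n+1}^2$, which is additive over components. I expect this step, namely rigorously identifying the restriction of the extended Weil--Petersson form, to be the main point. I would settle it with the $\kappa_1$ argument, which stays within cohomology.

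For $\omega_{\mathrm{WP}}(i\theta)$, the analogous input is the known class formula for conical angles. The cone points of angle $\theta_j$ correspond to imaginary lengths $L_j=i\theta_j$, so
\[
[\omega_{\mathrm{WP}}(i\theta)]=[\omega_{\mathrm{WP}}]-\tfrac12\sum_j\theta_j^2\psi_j .
\]
This holds in the range where no two cone points can collide, which is the range where the node is a cusp. With this formula, the same two restriction facts give the claim verbatim.
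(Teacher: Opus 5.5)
For boundary lengths your argument is complete. Theorem~\ref{mimi} applied on $\overline{\mathcal M}_{0,n}$ and on each factor, $\iota_I^*\psi_j=\psi_j$ on the factor containing $j$, the fact that the node has length $0$ on both factors, and $\iota_I^*\kappa_1=\kappa_1\otimes1+1\otimes\kappa_1$ together with $[\omega_{\mathrm{WP}}]=2\pi^2\kappa_1$ give the identity in cohomology, which is all that is used later. The paper states this lemma without proof, so there is nothing to compare that part with.

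The gap is in your last paragraph. You replace Lemma~\ref{cohomology} by $[\omega_{\mathrm{WP}}]-\frac12\sum_j\theta_j^2\psi_j$. That is only correct when every $J\in\mathcal S(\theta)$ has $c_J:=2\pi-\sum_{j\in J}(2\pi-\theta_j)=0$. The lemma, however, is asserted, and used in the proof of Theorem~\ref{recutheta}, for all $\theta$ satisfying \eqref{cond}. So you must also restrict the term $\frac12\sum_{J\in\mathcal S(\theta)}c_J^2\Delta_J$.

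When $c_I<0$ and $c_{I^c}<0$ this can be done, but it needs a short divisor check rather than your argument verbatim:
\begin{itemize}
\item Since $c_J\le c_I$ whenever $J\supseteq I$, Lemma~\ref{geometric} leaves only the sets $J\subsetneq I$ or $J\subsetneq I^c$.
\item Lemma~\ref{transverse} turns each of these into $\Delta_J$ on the corresponding factor.
\item These are exactly the correction terms of $\mathcal S(\theta_{|I},0)$ and $\mathcal S(\theta_{|I^c},0)$, because any set containing the new point of angle $0$ has $c\le 0$.
\end{itemize}

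If instead, say, $c_I>0$, the statement is false, not merely unproved. Lemma~\ref{auto} contributes $-\frac12 c_I^2(\psi_{n+1}+\psi_{n+2})$. The sets $J\supsetneq I$ in $\mathcal S(\theta)$ contribute divisors on the $I^c$-factor. The result is $0\otimes 1+1\otimes[\omega_{\mathrm{WP}}(i\theta_{|I^c},ic_I)]$: the node carries the merged cone angle $c_I$, as the Hassett contraction of $\Delta_I$ predicts, not the angle $0$.

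Here is a concrete counterexample. Take $n=5$, $\theta=(\theta_1,\theta_2,0,0,0)$ with $\theta_1+\theta_2>2\pi$, and $I=\{1,2\}$; the only $J\in\mathcal S(\theta)$ with $c_J\neq0$ is $\{1,2\}$. The four terms of Lemma~\ref{cohomology} restrict to classes of degrees $2\pi^2$, $0$, $0$ and $-1$ on $\overline{\mathcal M}_{0,\{3,4,5,n+2\}}$. Hence $\deg\iota_I^*[\omega_{\mathrm{WP}}(i\theta)]=2\pi^2-\frac12(\theta_1+\theta_2-2\pi)^2$, whereas $[\omega_{\mathrm{WP}}(0,0,0,0)]$ has degree $2\pi^2$.

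Your restriction to collision-free angles is therefore unavoidable, but it must be stated as a hypothesis, for instance $I,I^c\notin\mathcal S(\theta)$. This failing case, $|I|=2$ with $I\in\mathcal S_2(\theta)$, is exactly the one invoked in the proof of Theorem~\ref{recutheta}. There $V_{n-1}(i\theta_{|I^c},0)$ should read $V_{n-1}(i\theta_{|I^c},ic_I)$. The discrepancy vanishes for $\theta=(\pi,\dots,\pi)$, where $c_I=0$.
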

As before, the classes $[\omega_{\mathrm{WP}}(L_{|I},0)]$ and $[\omega_{\mathrm{WP}}(L_{|I^c},0)]$ live in 	$H^2(\overline{\mathcal M}_{I\cup\{n+1\}}\times \overline{\mathcal M}_{I^c\cup\{n+2\}})$ thanks to  K\"unneth decomposition.

We are now ready to prove Theorem~\ref{recu}. Recall that $$ \vol_{\omega_{\mathrm{WP}}(L)}(\overline{\mathcal M}_{0,n}(L))= \frac{1}{(n-3)!}\int_{\overline{\mathcal M}_{0,n}}\omega_{\mathrm{WP}}(L)^{n-3}$$
denotes the volume of the moduli space of spheres with $n$ geodesic boundaries of lengths $L$, and  $V_n(L)$ is defined by
$$ \vol_{\omega_{\mathrm{WP}}(L)}(\overline{\mathcal M}_{0,n}(L)) = 
\frac{(2\pi^2)^{n-3}}{(n-3)!}V_n(L).$$

\begin{preuve}[ of Theorem~\ref{recu}] We follow the ideas of ~\cite{zograf1993weil}.

Let us start  by computing the Weil--Petersson volume of the divisor $\Delta_I$. For this recall that $$\Delta_I\cong \overline{\mathcal M}_{0,I\cup\{n+1\}}\times \overline{\mathcal M}_{0,I^c\cup\{n+2\}}$$ and also recall Lemma \ref{boundary}.
For any $i\in \{1, \cdots, n\}$ and $I\subset\{1, \cdots, n\}$, with $|I|=i+1$,
\begin{eqnarray}\nonumber
	[\omega_{\mathrm{WP}}(L)]^{n-4}
	\cap \Delta_I
	&=& 
	\left(
	[\omega_{\mathrm{WP}}(L_{|I},0)]
	+
	[\omega_{\mathrm{WP}}(L_{|I^c},0)]
	\right)^{n-4}\cap  \overline{\mathcal M}_{0,I\cup\{n+1\}}\times \overline{\mathcal M}_{0,I^c\cup\{n+2\}}\\
	& = & \label{urbex} {n-4 \choose i-1}(2\pi^2)^{n-4}
	V_{i+2}(L_{|I},0)V_{n-i}(L_{|I^c},0).
\end{eqnarray}

Let us now write
\begin{equation}\label{grigri}
\int_{\overline{\mathcal M}_{0,n}}\left(\omega_{\mathrm{WP}}(L)\right)^{n-3}
= [\omega_{\mathrm{WP}}(L)]^{n-4}\cdot \mathrm{PD}([\omega_{\mathrm{WP}}(L)]).
\end{equation}
By~(\ref{grigri}), (\ref{urbex}) and Corollary~\ref{coroWP}, we obtain
\begin{eqnarray*}
	(2\pi^2)^{n-3} V_n(L)&=& 
	(2\pi^2)^{n-4} 
	\sum_{i=1}^{n-3}
\frac{\pi^2}{(n-1)}i(n-i-2) 
	{n-4 \choose i-1}
	\sum_{\substack{I\subset\{1, \cdots, n\}\\|I|=i+1}}
	V_{i+2}(L_{|I},0)V_{n-i}(L_{|I^c},0)\\
	&& + (2\pi^2)^{n-4} \frac{1}2
	\sum_{i=1}^{n-3}
	{n-4 \choose i-1}
	{n-1 \choose 2}^{-1}\\
	&&
	\sum_{\substack{k,\ell\in  \{1, \cdots n\}\\
			j\neq k, k\neq \ell, j\neq \ell
	}}
	\sum_{\substack{I\subset \{1, \cdots, n\}\\   j\in I,k\in I^c, \ell\in I^c}}
	L_j^2 
	V_{i+2}(L_{|I},0)V_{n-i}(L_{|I^c},0),
\end{eqnarray*}
hence the result.
\end{preuve}

\begin{preuve}[ of Corollary~\ref{coror}]
	Writing $I_q := I\cap \{1, \cdots, q\}$ (if $q=0$, $I_q =\emptyset$),
	by Theorem~\ref{recu},
	\begin{eqnarray*} V_n^q(L) &=&\frac12\sum_{i=1}^{n-3}
		\frac{1}{n-1}i(n-i-2)
		{n-4 \choose i-1}
		\sum_{|I|=i+1}
		V_{i+2}^{|I_q|}(L)V_{n-i}^{|(I^c)_q|}(L)\\
		&& 
		+ \frac{	L^2}{4\pi^2}
		\sum_{i=1}^{n-3}
		{n-4 \choose i-1}	
		\sum_{|I|=i+1}
		\sum_{\substack{j\in I_q\\ k(j)\notin I,\ell(j)\notin I}} 
		V_{i+2}^{|I_q|}(L)V_{n-i}^{|(I^c)_q|}(L)
		.
	\end{eqnarray*}
Here, $k(j)$ and $\ell(j)$ are fixed two distinct points of $\{1, \cdots, n\}\setminus\{j\}$.
	Since 
	\begin{eqnarray*} 
		\sum_{|I|=i+1}
		\sum_{\substack{j\in I_q\\ k(j)\notin I,\ell(j)\notin I}} 
		V_{i+2}^{|I_q|}(L)V_{n-i}^{|(I^c)_q|}(L) &=& 
		\sum_{|I|=i+1}|I_q|
		V_{i+2}^{|I_q|}(L)V_{n-i}^{|(I^c)_q|}(L)  \\
		&=& 
		\sum_{p=\max(0, i+1+q-n)}^{\min(q,i+1)}
		p
		\sum_{|I|=i+1, |I_q|=p}
		V_{i+2}^{p}(L)V_{n-i}^{q-p}(L) ,
	\end{eqnarray*}
we obtain that for every $q\in \{0, \cdots, n\}$,
	\begin{eqnarray*} 
	V_n^q(L) &=& 
	\nonumber\frac12
	\sum_{i=1}^{n-3}
	\frac{1}{n-1}i(n-i-2)
	{n-4 \choose i-1}
	\sum_{p=\max(0, i+1+q-n)}^{\min(q,i+1)} {q \choose p}{n-q \choose i+1-p}
	V_{i+2}^{p}(L)V_{n-i}^{q-p}(L)\\
	&& \nonumber
	+ \frac{	L^2}{4\pi^2}
	\sum_{i=1}^{n-3}
	{n-4 \choose i-1}	 \sum_{p=\max(0, i+1+q-n)}^{\min(q,i+1)}
	p	 {q \choose p}{n-q \choose i+1-p}
	V_{i+2}^{p}(L)V_{n-i}^{q-p}(L), 
	\end{eqnarray*}	
hence the result.
\end{preuve}
\begin{remark}Note that 
$$ V^n_n(L) = \frac12
\sum_{i=1}^{n-3}
{n-4 \choose i-1}
 {n \choose i+1}
V_{i+2}^{i+1}(L)V_{n-i}^{n-i-1}(L)
\left(\frac{1}{n-1}i(n-i-2)	+ (i+1)\frac{L^2}{2\pi^2}\right).$$
\end{remark}

\subsection{Proof of the recursion for conical points}\label{zpta}
Let $\theta=(\theta_1,\dots,\theta_n)\in [0,2\pi]^n$ be angles satisfying Condition \ref{cond}. The moduli space $\bm_{0,n}(i\theta)$ admits a compactification $\overline{\bm}_{0,n}(i\theta)$, called the Hassett compactification \cite{hassett2003moduli}. The form $\omega_{\mathrm{WP}}(i\theta)$ extends on the Hassett compactification to a closed $(1,1)$-form, still denoted by $\omega_{\mathrm{WP}}(i\theta)$, see \cite{schumacher2011weil,anagnostou2022volumes}.

Remark that there is a birational map $\overline{\mathcal M}_{0,n}\rightarrow \overline{\mathcal M}_{n}(i\theta)$ from Deligne--Mumford compactification $\overline{\mathcal M}_{0,n}$ to the Hassett compactification, contracting some boundary divisors of $\overline{\mathcal M}_{0,n}$. 
Thus, by pulling-back $\omega_{\mathrm{WP}}(i\theta)$ to the Deligne--Mumford compactification we obtain a closed $(1,1)$-form on $\overline{\mathcal M}_{0,n}$, still denoted by $\omega_{\mathrm{WP}}(i\theta)$. As the boundary divisors added in the compactification have measure zero, the Weil--Petersson volume we want to compute equals

$$ \vol_{\omega_{\mathrm{WP}}(i\theta)}(\mathcal M_{0,n}(i\theta))= \frac{1}{(n-3)!}\int_{\overline{\mathcal M}_{0,n}}\omega_{\mathrm{WP}}(i\theta)^{n-3}$$
and can be computed by means of intersection theory on $\overline{\mathcal M}_{0,n}$.


The following lemma is the conical version of Theorem~\ref{mimi}.
\begin{lemma}[Cohomology class of Weil--Petersson form]\cite[Proposition 2.7]{anagnostou2023weil}\label{cohomology}
The cohomology class of $\omega_{\mathrm{WP}}(i\theta)$ equals
$$[\omega_{\mathrm{WP}}(i\theta)]=[\omega_{\mathrm{WP}}]-\frac{1}{2}\sum_{i=1}^n\theta_i^2\psi_i+\frac{1}{2}\sum_{I\in\mathcal S(\theta)}(2\pi-\sum_{i\in I}(2\pi-\theta_i))^2\Delta_I
\in H^2(\overline{\bm}_{0,n}, \R).$$
Here,  $S(\theta)$ the set of subsets $I$ of $\{1,\dots,n\}$ verifying
\begin{itemize}
	\item $|I|\geq 2$;
	\item $2\pi-\sum_{i\in I}(2\pi-\theta_i)\geq 0$.
\end{itemize} 
\end{lemma}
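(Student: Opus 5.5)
The plan is to treat the class formula of Theorem~\ref{mimi} as the ``analytic continuation'' $L_j=i\theta_j$, and to account separately for the boundary divisors that are contracted by the birational map $\overline{\mathcal M}_{0,n}\to\overline{\mathcal M}_{0,n}(i\theta)$. I would write the unknown class as
$$[\omega_{\mathrm{WP}}(i\theta)] = [\omega_{\mathrm{WP}}]-\frac12\sum_j\theta_j^2\psi_j+\sum_{I}c_I\Delta_I .$$
This ansatz is legitimate because $H^2(\overline{\bm}_{0,n},\R)$ is spanned by the boundary divisors \cite{Keel1992}, and $[\omega_{\mathrm{WP}}]$ and the $\psi_j$ are themselves combinations of the $\Delta_I$ by Proposition~\ref{zozo} and Lemma~\ref{psik}. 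The claim is then that $c_I=\frac12\theta_I^2$ when $I\in\mathcal S(\theta)$, where
$$\theta_I:=2\pi-\sum_{i\in I}(2\pi-\theta_i),$$
and $c_I=0$ otherwise.

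\textbf{Step 1: the open part.} On the part of $\overline{\bm}_{0,n}$ where no subset in $\mathcal S(\theta)$ collides, I would prove the formula without correction terms. Here the hyperbolic cone surfaces degenerate exactly as surfaces with geodesic boundary do, since all $\theta_i<\pi$-type collisions are excluded. Wolpert's formula for $\omega_{\mathrm{WP}}$ in Fenchel--Nielsen coordinates, adapted to cone points, shows that the form is the Duistermaat--Heckman deformation of $\omega_{\mathrm{WP}}$ with parameter $-\theta_j^2$ along the circle bundle defining $\psi_j$. This is Mirzakhani's argument with $L_j^2$ replaced by $(i\theta_j)^2=-\theta_j^2$. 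It yields $c_I=0$ for every $I\notin\mathcal S(\theta)$.

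\textbf{Step 2: the collision divisors.} For $I\in\mathcal S(\theta)$, the coefficient is fixed by restricting to $\Delta_I$ and using Lemma~\ref{auto}. Geometrically, when the points of $I$ collide, the Schumacher--Trapani/Hassett picture shows that the limit cone metric has a single cone point of angle $\theta_I\ge 0$ on the $I^c$-component, while the $I$-component is contracted. Hence $\iota_I^*[\omega_{\mathrm{WP}}(i\theta)]$ should equal $[\omega_{\mathrm{WP}}(i\theta_{|I^c},i\theta_I)]$ pulled back from the second factor. The ansatz gives instead
$$\iota_I^*[\omega_{\mathrm{WP}}]-\tfrac12\sum\theta_j^2\iota_I^*\psi_j-c_I(\psi_{n+1}+\psi_{n+2})+(\text{other }c_J\text{ terms}).$$
Comparing the $\psi_{n+2}$ coefficient with $-\frac12\theta_I^2\psi_{n+2}$ forces $c_I=\frac12\theta_I^2$. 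I would argue by induction on $n$ (or on $|I|$), since the other $c_J$ with $J\subset I$ or $J\supset I$ appear in the restriction. The $\psi_{n+1}$ part then has to vanish together with the first factor. This serves as a consistency check: it amounts to the induction hypothesis that, for the $(|I|+1)$-pointed space in which the node has ``angle'' $-\theta_I$, the total class is zero.

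\textbf{Step 3: uniqueness.} I would conclude using the fact that a class on $\overline{\bm}_{0,n}$ is determined by its restrictions to all boundary divisors (again via Keel's presentation, for $n\ge5$; the case $n=4$ is a direct check on $\mathbb P^1$).

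\textbf{Main obstacle.} I expect the analytic part of Step 2 to be the hardest: showing that the pulled-back form extends continuously across $\Delta_I$ with the claimed restriction. This requires uniform control of the hyperbolic cone metric as cone points merge, and the argument in \cite{schumacher2011weil,anagnostou2022volumes} is what I would try to invoke first. Keeping track of nested subsets $J\subset I$ both in $\mathcal S(\theta)$ is the secondary combinatorial difficulty, and the induction on $|I|$ is meant to handle it.
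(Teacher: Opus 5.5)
The paper gives no proof of this lemma; it imports it from \cite[Proposition 2.7]{anagnostou2023weil}. Your outline tries to derive it, and it has a genuine gap: nothing in it actually produces the conical term $-\frac12\theta_j^2\psi_j$. Steps 2 and 3 only propagate the formula from fewer marked points to more, so its content must enter through Step 1 or through the base case $n=4$, and neither is justified.

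Step 1 invokes Mirzakhani's argument \cite{mirzakhani2007weil} with $L_j^2$ replaced by $-\theta_j^2$. That argument is a Duistermaat--Heckman computation: $\mathcal M_{0,n}(L)$ is the symplectic reduction, at level $L_j^2/2$, of the Hamiltonian torus action that twists bordered surfaces carrying a marked point on each boundary geodesic. A cone point has no boundary circle to twist along, and $-\theta_j^2/2$ is negative, hence outside the moment image. So the substitution $L_j=i\theta_j$ is purely formal.

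This is not peripheral. If all $\theta_j\le\pi$, then your $\theta_I$ satisfies $\theta_I\le 0$ for every $|I|\ge2$, with equality only for a pair of angles equal to $\pi$, which contributes $0$. So in that range Step 1 is the whole lemma. Likewise, for $n=4$ the class on $\overline{\mathcal M}_{0,4}\cong\mathbb P^1$ is just the number $\vol\,\mathcal M_{0,4}(i\theta)$. The lemma predicts, for instance, $(2\pi-\alpha)^2$ for angles $(\alpha,\alpha,0,0)$ with $\pi<\alpha<2\pi$; establishing such values is the same analytic problem, not a direct check.

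What you need is an input specific to cone metrics, for example the Schumacher--Trapani description of $\omega_{\mathrm{WP}}(i\theta)$ on the Hassett compactification \cite{schumacher2011weil,hassett2003moduli}. The terms $\frac12\theta_I^2\Delta_I$ then arise algebraically when Hassett-space classes are pulled back along $\overline{\mathcal M}_{0,n}\to\overline{\mathcal M}_{0,n}(i\theta)$. For instance, $\psi_j$ differs from the pullback of its Hassett counterpart by $\sum_{I\in\mathcal S(\theta),\,j\in I}\Delta_I$.

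Your geometric picture in Step 2 is right: the merged point has angle $\theta_I$ and the $I$-side is contracted. But even granting the analytic restriction statements, Step 2 does not determine $c_I$ as written. The divisors $\Delta_J$ with $J\supsetneq I$ or $J\subseteq I^c$ also restrict to the second factor, and $\psi_{n+2}$ is itself a combination of boundary divisors there (Lemma \ref{psik}). So there is no $\psi_{n+2}$-coefficient to read off; you only get a linear relation coupling $c_I$ to those other $c_J$. When $|I^c|=2$ you get nothing at all, since the second factor is a point and $\Delta_I$ is contracted to a point. There only the first factor helps, and it involves the $c_J$ with $J\subsetneq I$.

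Moreover, the vanishing on $\overline{\mathcal M}_{0,I\cup\{n+1\}}$ is not an induction hypothesis, because a node of angle $\pm\theta_I$ violates \eqref{cond}; it is an identity you must prove. It does hold. Note that every $J\subsetneq I$ with $|J|\ge2$ lies in $\mathcal S(\theta)$, and label each boundary divisor of $\overline{\mathcal M}_{0,I\cup\{n+1\}}$ by its side $J\subsetneq I$ not containing $n+1$. Then expand in the variables $2\pi-\theta_j$ and use $[\omega_{\mathrm{WP}}]=2\pi^2(\sum\psi-\sum\Delta)$, $\psi_j+\psi_{n+1}=\sum_{J\ni j}\Delta_J$, and $\psi_{n+1}=\sum_{J\supseteq\{j,k\}}\Delta_J$.

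The cleaner route is to drop the ``forcing'' and use Step 3 directly. For $n\ge5$, restriction to all boundary divisors is injective on $H^2$ (by Poincar\'e duality and hard Lefschetz). So it suffices to check that the right-hand side restricts correctly to every $\Delta_I$, including those with $I\notin\mathcal S(\theta)$, where you need the conical analogue of Lemma \ref{boundary}. That reduces the lemma to the analytic inputs above; it does not replace them.
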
 
Recall that $\Delta_I$ is defined in  Definition~\ref{dedef}.


\begin{preuve}[ of Theorem~\ref{recutheta}]
	The proof is similar to the one of Theorem~\ref{recu}, replacing $L$ by $i\theta$, and using Lemma \ref{cohomology} instead of Theorem~\ref{mimi}.
Again, for any $i\in \{1, \cdots, n\}$ and $I\subset\{1, \cdots, n\}$, $|I|=i+1$,
\begin{eqnarray*}
	[\omega_{\mathrm{WP}}(i\theta)]^{n-4}
	\cap \Delta_I
	& = & {n-4 \choose i-1}(2\pi^2)^{n-4}
	V_{i+2}(i\theta_{|I},0)V_{n-i}(i\theta_{|I^c},0).
\end{eqnarray*}
The latter implies that that for $I\in\mathcal S(\theta)$ with $|I|>2$, one has $[\omega_{\mathrm{WP}}(i\theta)]^{n-4}
	\cap \Delta_I=0$. Indeed, $\bm_{0,I\cup\{n+1\}}(i\theta_{|I},0)=\emptyset$ because $(\theta_{|I},0)$ does not satisfy condition~\eqref{cond}. In particular,  $V_{i+2}(i\theta_{|I},0)=0$.
	
Moreover, one can check directly from the explicit computation of the cohomology class of $\omega_{\mathrm{WP}}(i\theta)$ (see Lemma \ref{cohomology}) and from Lemmas \ref{geometric},  \ref{transverse} and \ref{auto}, that 
	$$|I|=2 \Rightarrow 
	\left\lbrace
	\begin{array}{l}
	\Delta_I\cong  \overline{\mathcal M}_{0,I^c\cup\{n+2\} }\\
	\left[	\omega_{\mathrm{WP}}(i\theta) \right]
	\cap \Delta_I=[\omega_{\mathrm{WP}}(i\theta_{|I^c},0)]\\
	\left[\omega_{\mathrm{WP}}(i\theta)\right]^{n-4}
	\cap \Delta_I= (2\pi^2)^{n-4} V_{n-1}(i\theta_{|I^c},0).
	\end{array}\right.$$ 
	This justifies  the initial condition  $V_3(i\theta) = 1$ for any $\theta\in [0,2\pi]^3$.

We then obtain
\begin{eqnarray*}
V_n (i\theta )& =& \frac12 \sum_{i=1}^{n-3}
\frac{1}{n-1}i(n-i-2)
 {n-4 \choose i-1}
 \sum_{|I|=i+1}
 V_{i+2}(i\theta_{|I},0)V_{n-i}(i\theta_{|I^c},0)\\
 &&
 - \frac{1}{4\pi^2}
 \sum_{i=1}^{n-3}
 {n-4 \choose i-1}{n-1\choose 2}^{-1}
 \sum_{\substack{j,k,\ell\in  \{1, \cdots n\}\\
 	i\neq j, i\neq j, j\neq \ell
 }}
\theta_j^2
 \sum_{\substack{|I|=i+1 \\ j\in I, k\in I^c, \ell\in I^c}}
 V_{i+2}(i\theta_{|I},0)V_{n-i}(i\theta_{|I^c},0)\\
 &&
 +\frac{1}{4\pi^2}\sum_{I\in\mathcal S_2(\theta)}(2\pi-\sum_{i\in I}(2\pi-\theta_i))^2V_{n-1}(i\theta_{|I^c},0).
\end{eqnarray*}
Hence the result.
\end{preuve}

	\subsection{Proof of the differential equation}

\ 
\begin{preuve}[ of Theorem~\ref{h1}] 
	By Corollary~\ref{coror} and after some manipulations, 
	\beqe 
	xh_1''-h_1' 
	& = &
	\frac12
	\sum_{n=6}^\infty
	x^{n-4}
	\sum_{j=3}^{n-3}
	\frac{2(n-j-1)(n-j-2)(j-2)	V_{n-j}^{1}V_{j}}{(j-1)!(j-3)!(n-j-1)!(n-j-3)!}\\
	&&
	+ \frac{L^2}{4\pi^2}
	\sum_{n=6}^\infty
	x^{n-4}
	\sum_{j=3}^{n-3}\frac{(n-3)			V_{n-j}^{1}V_{j}}{(j-3)!(j-1)!(n-j-3)!(n-j-2)!}	
	\eeqe
	so that 
	\beqe 
	xh_1''-h_1' 
	& =& 
	h_1''(xh'-h)\\
	&&
	+ \frac{L^2}{4\pi^2}
	\sum_{n=6}^\infty
	x^{n-4}
	\sum_{j=3}^{n-3}		\frac{(n-j-2)(n-j-1)	V_{n-j}^{1}V_{j}}{(j-1)!(j-3)!(n-j-1)!(n-j-3)!}	\\
	&&
	+ \frac{L^2}{4\pi^2}
	\sum_{n=6}^\infty
	x^{n-4}
	\sum_{j=3}^{n-3}		\frac{(j-1)(n-j-1)	V_{n-j}^{1}V_{j}}{(j-1)!(j-3)!(n-j-1)!(n-j-3)!}	.
	\eeqe
	Finally
	\beqe 
	xh_1''-h_1' & =& 	 
	h_1''(xh'-h)
	+ \frac{L^2}{4\pi^2}(
	h_1''h +h_1'h').
	\eeqe
	Hence the result.
\end{preuve}

\vspace{1cm}
{\bf Acknowledgments}: The authors thank Roland Bacher, Philippe Eyssidieux, Jérémy Guéré, Giovanni Inchiostro, Francis Lazarus, Tanguy Rivoal and Peter Zograf. The research leading to these results has received funding from the French Agence nationale de la ANR-20-CE40-0017 (Adyct).

\bibliographystyle{amsplain}
\bibliography{hyperelliptic.bib}

\providecommand{\bysame}{\leavevmode\hbox to3em{\hrulefill}\thinspace}
\providecommand{\MR}{\relax\ifhmode\unskip\space\fi MR }
\providecommand{\MRhref}[2]{%
  \href{http://www.ams.org/mathscinet-getitem?mr=#1}{#2}
}
\providecommand{\href}[2]{#2}
\begin{thebibliography}{10}

\bibitem{anagnostou2023weil}
Lukas Anagnostou, Scott Mullane, and Paul Norbury, \emph{Weil-{P}etersson
  volumes, stability conditions and wall-crossing}, arXiv preprint
  arXiv:2310.13281 (2023), 1--38.

\bibitem{anagnostou2022volumes}
Lukas Anagnostou and Paul Norbury, \emph{Volumes of moduli spaces of hyperbolic
  surfaces with cone points}, arXiv preprint arXiv:2212.13701 (2022), 1--40.

\bibitem{do2009weil}
Norman Do and Paul Norbury, \emph{Weil--{P}etersson volumes and cone surfaces},
  Geometriae Dedicata \textbf{141} (2009), no.~1, 93--107.

\bibitem{turiaci2023}
Lorenz Eberhardt and Gustavo~J. Turiaci, \emph{2d dilaton gravity and the
  {W}eil-{P}etersson volumes with conical defects}, arXiv preprint (2023),
  1--44.

\bibitem{faber2000logarithmic}
Carel Faber and Rahul Pandharipande, \emph{{Logarithmic series and Hodge
  integrals in the tautological ring. With an appendix by Don Zagier}},
  Michigan Mathematical Journal \textbf{48} (2000), no.~1, 215--252.

\bibitem{goldman1984symplectic}
William~M Goldman, \emph{The symplectic nature of fundamental groups of
  surfaces}, Advances in Mathematics \textbf{54} (1984), no.~2, 200--225.

\bibitem{hassett2003moduli}
Brendan Hassett, \emph{Moduli spaces of weighted pointed stable curves},
  Advances in Mathematics \textbf{173} (2003), no.~2, 316--352.

\bibitem{hide2025large}
Will Hide and Joe Thomas, \emph{{Large-$n$ asymptotics for Weil-Petersson
  volumes of moduli spaces of bordered hyperbolic surfaces}}, Communications in
  Mathematical Physics \textbf{406} (2025), no.~9, 203.

\bibitem{kaufmann1996higher}
Ralph Kaufmann, Yuri Manin, and Don Zagier, \emph{{Higher Weil-Petersson
  volumes of moduli spaces of stable n-pointed curves}}, Communications in
  mathematical physics \textbf{181} (1996), no.~3, 763--787.

\bibitem{Keel1992}
S.~Keel, \emph{Intersection theory of moduli space of stable n-pointed curves
  of genus zero}, Transactions of the American Mathematical Society
  \textbf{330} (1992), 545--574.

\bibitem{liu2009recursion}
Kefeng Liu and Hao Xu, \emph{{Recursion formulae of higher Weil--Petersson
  volumes}}, International Mathematics Research Notices \textbf{2009} (2009),
  no.~5, 835--859.

\bibitem{manin2000invertible}
Yuri~I. Manin and Peter Zograf, \emph{{Invertible cohomological field theories
  and Weil-Petersson volumes}}, Annales de l'Institut Fourier \textbf{50}
  (2000), no.~2, 519--535.

\bibitem{mcowen1988point}
Robert~C McOwen, \emph{Point singularities and conformal metrics on riemann
  surfaces}, Proceedings of the American Mathematical Society \textbf{103}
  (1988), no.~1, 222--224.

\bibitem{mirzakhani2007simple}
Maryam Mirzakhani, \emph{{Simple geodesics and Weil-Petersson volumes of moduli
  spaces of bordered Riemann surfaces}}, Inventiones mathematicae \textbf{167}
  (2007), no.~1, 179--222.

\bibitem{mirzakhani2007weil}
\bysame, \emph{Weil-{P}etersson volumes and intersection theory on the moduli
  space of curves}, Journal of the American Mathematical Society \textbf{20}
  (2007), no.~1, 1--23.

\bibitem{mirzakhani13}
\bysame, \emph{{Growth of Weil-Petersson volumes and random hyperbolic surface
  of large genus}}, {J. Differ. Geom.} \textbf{94} (2013), no.~2, 267--300.

\bibitem{mirzakhani2019lengths}
Maryam Mirzakhani and Bram Petri, \emph{Lengths of closed geodesics on random
  surfaces of large genus}, Commentarii Mathematici Helvetici \textbf{94}
  (2019), no.~4, 869--889.

\bibitem{mirzakhani2015towards}
Maryam Mirzakhani and Peter Zograf, \emph{Towards large genus asymptotics of
  intersection numbers on moduli spaces of curves}, Geometric and Functional
  Analysis \textbf{25} (2015), no.~4, 1258--1289.

\bibitem{schumacher2011weil}
Georg Schumacher and Stefano Trapani, \emph{{Weil-Petersson geometry for
  families of hyperbolic conical Riemann surfaces}}, Michigan Mathematical
  Journal \textbf{60} (2011), no.~1, 3--33.

\bibitem{troyanov1991prescribing}
Marc Troyanov, \emph{Prescribing curvature on compact surfaces with conical
  singularities}, Transactions of the American Mathematical Society
  \textbf{324} (1991), no.~2, 793--821.

\bibitem{wolpert1983homology}
Scott Wolpert, \emph{On the homology of the moduli space of stable curves},
  Annals of Mathematics \textbf{118} (1983), no.~3, 491--523.

\bibitem{zograf1993weil}
Peter Zograf, \emph{{The Weil-Petersson volume of the moduli space of punctured
  spheres}}, Contemporary Mathematics \textbf{150} (1993), 367--367.

\bibitem{zvonkine2012introduction}
Dimitri Zvonkine, \emph{An introduction to moduli spaces of curves and their
  intersection theory}, Handbook of Teichm{\"u}ller theory \textbf{3} (2012),
  667--716.

\end{thebibliography}
\vspace{12mm}

\noindent
Michele Ancona \\
Laboratoire J.A. Dieudonn\'e\\
UMR CNRS 7351\\
Universit\'e C\^ote d'Azur, Parc Valrose\\
06108 Nice, Cedex 2, France\\

\noindent
Damien Gayet\\
Institut Fourier, UMR 5582, \\Laboratoire de Mathématiques, Université Grenoble Alpes, \\Institut Universitaire de France, \\CS 40700, 38058 Grenoble cedex 9, France

\end{document}